\newcommand{\comment}[1]{}
\newcommand{\U}{\mathcal{U}}
\newcommand{\Gln}{\operatorname{GL}}
\numberwithin{equation}{section}
\newtheorem{theorem}{Theorem}[section]
\newtheorem{lemma}[theorem]{Lemma}
\newtheorem{prop}[theorem]{Proposition}
\newtheorem*{claim*}{Claim}
\newtheorem{paso}{Step}
\theoremstyle{definition}
\newtheorem{definition}[theorem]{Definition}
\theoremstyle{remark}
\newtheorem{remark}[theorem]{Remark}
\def\pf{\begin{proof}}
	\def\epf{\end{proof}}
\newcommand{\ku}{ \Bbbk}
\newcommand{\kut}{ \Bbbk^{\times}}
\newcommand{\ghost}{\mathscr{G}}
\newcommand{\ghostito}{\mathtt{j}}
\newcommand{\ghostvar}{M}
\newcommand\I{\mathbb I}
\newcommand\N{\mathbb N}
\newcommand{\bP}{\mathbb{P}}
\newcommand{\bq}{\mathbf{q}}
\newcommand\Z{\mathbb Z}
\newcommand{\zdos}{\Z^2}
\newcommand{\ld}[2]{\lambda_{#1}^{(#2)}}
\newcommand{\ldv}[2]{\mathtt L_{#1}^{(#2)}}
\newcommand{\ldsol}[2]{\ell_{#1}^{(#2)}}
\newcommand{\af}[2]{\zeta_{#1}^{(#2)}}
\renewcommand{\_}[1]{_{\left( #1 \right)}}
\newcommand{\quot}[2]{\mathscr{D}_{#2}{(#1)}}
\newcommand{\cA}{\mathcal{A}}
\newcommand\cK{\mathcal{K}}
\newcommand\cV{\mathcal{V}}
\newcommand{\rep}{\operatorname{rep}}
\newcommand\ngo{\mathfrak n}
\newcommand{\lstr}{\mathfrak L}
\newcommand\Aut{\operatorname{Aut}}
\newcommand{\id}{\operatorname{id}}
\newcommand{\Irr}{\operatorname{irrep}}
\newcommand{\IRR}{\operatorname{Irrep}}
\newcommand{\GK}{\operatorname{GKdim}}
\newcommand\ord{\operatorname{ord}}
\newcommand{\yd}[1]{{}^{ #1 }_{ #1 }\mathcal{YD}}
\newcommand{\toba}{\mathscr{B}}
\newcommand{\Vs}{\mathscr{V}}
\newcommand{\ot}{\otimes}
\newcounter{tabla}\stepcounter{tabla}
\begin{document}
	
	\title[On the Laistrygonian Nichols algebras]{On the Laistrygonian Nichols algebras \\ that are domains}
	
	\author[Andruskiewitsch, Bagio, Della Flora, Flores]
	{Nicol\'as Andruskiewitsch, Dirceu Bagio$^\ast$  , Saradia Della Flora, Daiana Flores}

	\address{FaMAF-Universidad Nacional de C\'ordoba, CIEM (CONICET), \newline
		Medina A\-llen\-de s/n, Ciudad Universitaria, (5000) C\' ordoba, \newline
		Rep\'ublica Argentina.} \email{nicolas.andruskiewitsch@unc.edu.ar}
	
	\address{Departamento de Matem\'atica, Universidade Federal de Santa Maria, \newline
		97105-900, Santa Maria, RS, Brazil} \email{dirceu.bagio@ufsm.br, saradia.flora@ufsm.br, flores@ufsm.br}

	\begin{abstract}
			We consider a class of Nichols algebras $\mathscr{B} (\mathfrak L_q( 1, \mathscr{G}))$ introduced in \cite{aah-triang}
			which are domains and have many favorable properties  like AS-regular and strongly noetherian.
			We classify their finite-dimensional simple modules and their point modules.
	\end{abstract}

	\thanks{\noindent 2000 \emph{Mathematics Subject Classification.}
		16G30,16G60, 16T05. \newline
		\textit{Keywords}: Hopf algebras; Nichols algebras; simple modules; point modules.\newline
		$^\ast$ Corresponding author\newline
		N. A. was partially supported by CONICET: PIP 112201501000 71CO, Foncyt: PICT-2019-2019-03660 and Secyt (UNC)}

	\maketitle

	\setcounter{tocdepth}{2}
	
	\tableofcontents
	
	\section{Introduction}
	\subsection{The context}
	The classification of Nichols algebras with finite Gelfand-Kirillov dimension ($\GK$)
	over abelian groups, although not yet complete, has recently made significant progress; 
	see \cite{aah-triang,AAM0,angiono-garcia} and references therein. 
	In particular  those that are domains  are completely classified, see \cite[Theorem 1.4.1]{aah-triang} and \cite{AA}.  
	Beyond those coming from quantum groups with generic parameter and the Jordan plane, 
	the next examples  are the Laistrygonian Nichols algebras $\toba(\lstr_q( 1, \ghost))$, 
	where $\ghost \in \N$. The precise definition is recalled below but notice that there are other Laistrygonian Nichols algebras 
	that are not domains.
	The purpose of this paper is to study the Laistrygonian Nichols algebras $\toba(\lstr_q( 1, \ghost))$
	as algebras, rather than as braided Hopf algebras.
	
	For the importance of  Nichols algebras over abelian groups towards 
	the classification of  pointed Hopf algebras with finite $\GK$ see \cite{A-nilpotent}. 
	
	\subsection{Main results}
	In Section \ref{section:preliminaries}
	we recall the definition and basic properties of the algebras $\toba(\lstr_q( 1, \ghost))$ from \cite{aah-triang}. 
	Since they have a PBW basis, they are iterated Ore extensions and therefore 
	AS-regular and Cohen-Macaulay, see Proposition \ref{prop:Ore}. Our first main result is the classification of the finite-dimensional
	simple modules of $\toba(\lstr_q( 1, \ghost))$, see Theorem \ref{thm:irredu-lest}. 
	Here is the basic idea of the proof:  there is a surjective algebra map $\nu_{\ghost}$ from $\toba(\lstr_q( 1, \ghost))$ to the quantum plane
	$\ku_q[X,Y]$ (or the usual polynomial ring since $q=1$ is allowed). 
	We show that any finite-dimensional irreducible representation of $\toba(\lstr_q( 1, \ghost))$  
	factorizes through $\nu_{\ghost}$, and thus is known.
	This curious phenomenon appeared in other examples, see for instance \cite[Lemma 2.1]{Na} and \cite[Theorem 3.11]{adp}.
	In Section \ref{section:twisting} we discuss relations between different Laistrygonian Nichols algebras.
	Our second main result is the classification of the point modules of $\toba(\lstr_q( 1, \ghost))$, see Theorem \ref{thm:point-modules-les}. 
	
	\subsection*{Notations and conventions} \label{subsec:notation}
	We denote the natural numbers by $\N$ and $\N_0=\N\cup \{0\}$.
	If $k < t \in \N_0$, then we denote $\I_{k, t} = \{n\in \N_0: k\le n \le t \}$,
	and   $\I_{t} := \I_{1,t}$. We work over an algebraically closed field $\ku$ of characteristic 0.

	All modules are left. As usual, $\rep A$ is the category of finite-dimensional representations of an algebra $A$;
	the set of isomorphism classes of simple objects in $\rep A$ is denoted $\Irr A$.
	As usual we talk without distinctions of an element of $\Irr A$ or its representative.
	We use indistinctly the languages of representations and modules.
	The braided tensor category of left Yetter-Drinfeld modules over a Hopf algebra $H$ is denoted by $\yd{H}$.
	
	Our reference for Hopf algebras is \cite{Rad-libro}. We use the expression \emph{braided Hopf algebra} as in \cite{Tk}; that is a (rigid) braided vector space with compatible multiplication and comultiplication. 
	As explained in \emph{loc. cit.} this means that it can be realized as a Hopf algebra in the braided tensor category $\yd{H}$
	over some Hopf algebra $H$.

	\section{Preliminaries}\label{section:preliminaries}
	\subsection{The Nichols algebra \texorpdfstring{$\toba(\lstr_q( 1, \ghost))$}{}}\label{subsection:lstr-11disc}
	
	We introduce the algebra of our interest; see \cite[\S 4.3.1]{aah-triang} for details.
	Let $\ghost \in \N$ and $q \in \ku^{\times}$. The algebra
	$\toba(\lstr_q( 1, \ghost))$ is presented by generators $x_1,x_2, (z_n)_{ n \in \I_{0,\ghost}}$ with defining relations
	\begin{align}\label{eq:rels B(V(1,2))}
		x_2x_1 &- x_1x_2+\frac{1}{2}x_1^2, \\   
		x_1 z_0 &- q \, z_0 x_1,  \label{eq:lstr-rels&11disc-1}  \\
		z_nz_{n+1}&- q^{-1} \, z_{n+1}z_n, &   n \in \I_{0,\ghost-1}, \label{eq:lstr-rels&11disc-2} 
		\\
		\label{eq:-1block+point-z2}
		x_2z_n &- qz_nx_2 - z_{n+1}, &  n \in \I_{0,\ghost-1},
		\\
		x_2z_\ghost &- qz_\ghost x_2.  \label{eq:lstr-rels&11disc-qserre}
	\end{align}
	$\toba(\lstr_q( 1, \ghost))$ is a domain and has a PBW-basis
	\begin{align*}
		B_{\ghost} = \{ x_1^{m_1} x_2^{m_2} z_{\ghost}^{n_{\ghost}} \dots z_1^{n_1} z_0^{n_0}: m_i, n_j \in\N_0\};
	\end{align*}
	hence $\GK \toba(\lstr_q( 1, \ghost)) = 3+\ghost$.
	The algebra $\toba(\lstr_q( 1, \ghost))$ is graded, with
	\begin{align}
		\label{eq:grading}
		\deg x_1 &= \deg x_2 = 1,& \deg z_n &= n+1, &  n \in \I_{0,\ghost}.
	\end{align}
	
	Actually, $\toba(\lstr_q( 1, \ghost))$ is the Nichols algebra of
	the braided vector space $\lstr_q( 1, \ghost)$ that has a basis $x_1, x_2, x_3 := z_0$, cf. \cite[\S 4.1.1]{aah-triang}
	and Section \ref{section:twisting} below.
	Indeed \eqref{eq:-1block+point-z2} is just the recursive definition of $z_n$ in terms of $x_1, x_2,z_0$.
	Notice that $q = q_{12} = q_{21}^{-1}$ 
	in the notation of  \cite{aah-triang}. In \emph{loc. cit.} the parameter $q$ was somehow neglected as the main focus
	was on the classification problem, see also Proposition \ref{prop:twisting}.  But for the sake of the algebraic properties
	the role of $q$ is central, as we see in this paper.

	\begin{remark} \label{ob-sub-jp}
		Notice that the subalgebra of $\toba(\lstr_q( 1, \ghost))$ generated by $x_1$ and  $x_2$ is isomorphic to the Jordan plane
		and has defining relation \eqref{eq:rels B(V(1,2))}.
	\end{remark}

	It follows from \eqref{eq:rels B(V(1,2))} by a standard argument that
	\begin{align}\label{eq:jordan}
		x_1x_2^j&= \big(x_2+\frac{1}{2}x_1 \big)^jx_1, & j &\in \N.
	\end{align}
	Also, one derives from \cite[Lemmas 4.3.3, 4.3.4]{aah-triang} that
	\begin{align}
		\label{eq:-1block+point}
		x_1z_n &=  qz_nx_1, &  n \in \I_{0,\ghost},
		\\
		\label{eq:bracket ztzk}
		z_mz_n&=  q^{m-n}  z_nz_m, &  m<n\in  \I_{0,\ghost}.
	\end{align}
	
	\subsection{Ring-theoretical properties} We  show that $\toba(\lstr_q( 1, \ghost))$ is an iterated Ore extension.
	Hence  it is strongly noetherian
	by \cite[Proposition  4.10]{ASZ};  AS-regular by \cite[Proposition 2]{AST} and Cohen-Macaulay by \cite[Lemma 5.3]{ZZ}.
	
	\medbreak
	We start by an auxiliary result. Consider the following subalgebras of $\toba(\lstr_q( 1, \ghost))$:
	$R_1 = \ku[x_1]$, $R_2 = \ku\langle x_1, x_2\rangle$, $R_3 = \ku\langle x_1, x_2, z_{\ghost}\rangle$ and in general
	\begin{align*}
		R_{\ghost +3 - \ghostito} &= \ku\langle x_1, x_2, z_{\ghost}, z_{\ghost - 1}, \dots, z_{\ghostito}\rangle, & \ghostito&\in \I_{0, \ghost}.
	\end{align*}  
	Let $\ghostito\in \I_{0, \ghost}$.  Because of the defining relations,
	\eqref{eq:-1block+point} and \eqref{eq:bracket ztzk} we have that
	\begin{align*}
		B_{\ghost +3 - \ghostito} =\{ x_1^{m_1} x_2^{m_2} z_{\ghost}^{n_{\ghost}} \dots z_\ghostito^{n_\ghostito}: m_i, n_h \in\N_0\}
	\end{align*}
	is a PBW-basis of $R_{\ghost +3 - \ghostito}$.  
	
	Let now $\ghostito \in \I_{1, \ghost -1}$. We denote by $\underline{x}_1,\underline{x}_2, 
	(\underline{z}_n)_{ n \in \I_{0,\ghost -\ghostito}}$ 
	the generators of $\toba(\lstr_q( 1, \ghost - \ghostito))$  and by $\underline{B}_{\ghost - \ghostito}$
	the corresponding PBW-basis. 
	Then there is an algebra map $\psi: \toba(\lstr_q( 1, \ghost - \ghostito)) \to R_{\ghost +3 - \ghostito}$ given by
	\begin{align*}
		\underline{x}_1 & \mapsto x_1, &\underline{x}_2& \mapsto x_2, &
		\underline{z}_n & \mapsto z_{\ghostito + n}, & n &\in \I_{0, \ghost -\ghostito}.
	\end{align*}
	Indeed it is easy to see that this assignement preserves the defining relations.

	\begin{lemma}\label{lema:subalgebra-lestrygon}
		The map  $\psi: \toba(\lstr_q( 1, \ghost - \ghostito)) \to R_{\ghost +3 - \ghostito}$ is an  isomorphism.
	\end{lemma}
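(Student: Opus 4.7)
The plan is a straightforward PBW comparison: show that $\psi$ sends a basis to a basis.

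First I would note that $\psi$ is surjective by construction, since the images of the generators $\underline{x}_1,\underline{x}_2,\underline{z}_0,\dots,\underline{z}_{\ghost-\ghostito}$ are precisely the generators $x_1,x_2,z_\ghostito,\dots,z_\ghost$ of $R_{\ghost+3-\ghostito}$. The fact that $\psi$ is a well-defined algebra homomorphism has already been observed right before the lemma, since the assignment respects all the defining relations of $\toba(\lstr_q(1,\ghost-\ghostito))$ (reindexing $\underline{z}_n \mapsto z_{\ghostito+n}$ carries the relations \eqref{eq:lstr-rels&11disc-1}--\eqref{eq:lstr-rels&11disc-qserre} for $\toba(\lstr_q(1,\ghost-\ghostito))$ into a subset of the corresponding relations for $\toba(\lstr_q(1,\ghost))$, together with \eqref{eq:-1block+point} and \eqref{eq:bracket ztzk}).

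For injectivity I would argue via PBW bases. The PBW basis of $\toba(\lstr_q(1,\ghost-\ghostito))$ is
\[
\underline{B}_{\ghost-\ghostito} = \{\underline{x}_1^{m_1}\underline{x}_2^{m_2}\underline{z}_{\ghost-\ghostito}^{n_{\ghost-\ghostito}}\cdots \underline{z}_0^{n_0} : m_i, n_h \in \N_0\},
\]
and $\psi$ sends the basis element indexed by $(m_1,m_2,n_{\ghost-\ghostito},\dots,n_0)$ to
\[
x_1^{m_1}x_2^{m_2}z_\ghost^{n_{\ghost-\ghostito}}\cdots z_\ghostito^{n_0},
\]
which is exactly the generic element of $B_{\ghost+3-\ghostito}$ described before the lemma. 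So $\psi$ restricts to a bijection $\underline{B}_{\ghost-\ghostito}\to B_{\ghost+3-\ghostito}$ between these two indexed sets of monomials.

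It remains to check that $B_{\ghost+3-\ghostito}\subseteq \toba(\lstr_q(1,\ghost))$ is linearly independent; but this is immediate since it is a subset of the PBW basis $B_\ghost$ of $\toba(\lstr_q(1,\ghost))$ stated in Section \ref{subsection:lstr-11disc}. Hence $\underline{B}_{\ghost-\ghostito}$ is mapped bijectively onto a linearly independent set, and since $\underline{B}_{\ghost-\ghostito}$ spans $\toba(\lstr_q(1,\ghost-\ghostito))$, the map $\psi$ is injective. Combined with surjectivity this yields the isomorphism. The only non-routine point in the whole argument is being confident that $B_{\ghost+3-\ghostito}$, presented above as a spanning set of $R_{\ghost+3-\ghostito}$, is genuinely linearly independent; this is where the already established PBW theorem for $\toba(\lstr_q(1,\ghost))$ does all the work.
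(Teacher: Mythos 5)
Your argument is correct and is exactly the paper's argument, merely spelled out in more detail: the paper's proof also consists of observing that $\psi$ carries the PBW basis $\underline{B}_{\ghost-\ghostito}$ bijectively onto $B_{\ghost+3-\ghostito}$, whose linear independence follows from its being a subset of the PBW basis $B_\ghost$. No new idea is introduced; you have simply made the surjectivity/injectivity bookkeeping explicit.
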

	\pf 
	Clearly $\psi$  sends the PBW-basis $\underline{B}_{\ghost - \ghostito}$  to the PBW-basis $B_{\ghost +3 - \ghostito}$ .
	\epf
	
	To fix the notation, we recall that given  a ring $R$,  $ \sigma \in \Aut (R)$ and 
	a $(\sigma, 1)$-derivation $\delta$ of $ R$, i.~e.
	$ \delta (rr')=\sigma (r)\delta (r')+\delta (r)r'$,
	the Ore extension 
	$ R[X;\sigma ,\delta ]$ (or simply $R[X; \sigma]$  if $\delta = 0$)
	is the ring of polynomials $R[X]$ with  the multiplication determined by
	$ Xr=\sigma (r)X+\delta (r)$, $r\in R$.

	\begin{prop}\label{prop:Ore} 
		The algebra $\toba(\lstr_q( 1, \ghost))$ is an iterated Ore extension.
	\end{prop}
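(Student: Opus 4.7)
The plan is to exhibit $\toba(\lstr_q(1,\ghost))$ as the top of a tower of Ore extensions built on the chain
\[R_1 \subset R_2 \subset R_3 \subset \cdots \subset R_{\ghost+3} = \toba(\lstr_q(1,\ghost))\]
already set up before the statement, adjoining the generators in the order $x_1$, $x_2$, $z_\ghost$, $z_{\ghost-1}$, \ldots, $z_0$. The base case is immediate: $R_1 = \ku[x_1]$ is a polynomial ring, and $R_2$ is the Jordan plane, which by \eqref{eq:rels B(V(1,2))} is the classical Ore extension $R_1[x_2;\id,\delta]$ with $\delta(x_1) = -\tfrac{1}{2}x_1^2$. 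For $R_2 \subset R_3$, the relations \eqref{eq:lstr-rels&11disc-qserre} and \eqref{eq:-1block+point} with $n=\ghost$ rearrange to $z_\ghost\, x_i = q^{-1} x_i\, z_\ghost$ for $i=1,2$, which suggests the automorphism $\sigma$ of $R_2$ given by $\sigma(x_i) = q^{-1} x_i$. A quick check shows that $\sigma$ scales the Jordan plane relation by $q^{-2}$, so it is well defined on $R_2$, and $R_3 \cong R_2[z_\ghost;\sigma]$ follows by comparing PBW bases.

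For the general step $R_i \subset R_{i+1}$ with $i \geq 3$, set $\ghostito = \ghost+3-i \in \I_{1,\ghost}$ and adjoin $z_{\ghostito-1}$. Rearranging the relations \eqref{eq:lstr-rels&11disc-1}--\eqref{eq:lstr-rels&11disc-qserre} together with the consequences \eqref{eq:-1block+point} and \eqref{eq:bracket ztzk} into the form $z_{\ghostito-1}\,r = \sigma_\ghostito(r)\,z_{\ghostito-1} + \delta_\ghostito(r)$, I read off the candidate skew derivation
\begin{align*}
\sigma_\ghostito(x_1) &= q^{-1}x_1, & \sigma_\ghostito(x_2) &= q^{-1}x_2, & \sigma_\ghostito(z_k) &= q^{\ghostito-1-k}z_k\ (k\geq\ghostito),\\
\delta_\ghostito(x_2) &= -q^{-1}z_\ghostito,
\end{align*}
with $\delta_\ghostito$ vanishing on $x_1$ and on every $z_k$. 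I would then verify that $\sigma_\ghostito$ preserves each defining relation of $R_i$ (every such relation is $q$-homogeneous for an appropriate weight and so is merely rescaled) and that $\delta_\ghostito$ satisfies the Leibniz identity $\delta_\ghostito(rs) = \sigma_\ghostito(r)\delta_\ghostito(s) + \delta_\ghostito(r)s$ against those relations. Granting both, the assignment fixing $R_i$ and sending the Ore indeterminate to $z_{\ghostito-1}$ defines a surjection $R_i[z_{\ghostito-1};\sigma_\ghostito,\delta_\ghostito] \to R_{i+1}$; it sends the standard Ore PBW basis bijectively onto $B_{\ghost+3-(\ghostito-1)}$, hence is an isomorphism.

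The main obstacle is the Leibniz check of $\delta_\ghostito$ against the Jordan plane relation and against the relations $x_2 z_k - q z_k x_2 - z_{k+1}$ for $k\geq \ghostito$ (together with the boundary case $x_2 z_\ghost - q z_\ghost x_2$), where the nonzero term $\delta_\ghostito(x_2) = -q^{-1} z_\ghostito$ must interact coherently with the nontrivial $\sigma_\ghostito$-scaling on the $z_k$. Each of these reduces to an identity in $R_i$ that hinges on a single application of \eqref{eq:-1block+point} (to commute $z_\ghostito$ past $x_1$) or of \eqref{eq:bracket ztzk} (to reorder $z_\ghostito$ and $z_k$), with the $q$-exponents conspiring to cancel. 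Everything else is routine manipulation of the PBW basis, and Lemma \ref{lema:subalgebra-lestrygon} may be invoked to organise the induction by identifying $R_{\ghost+3-\ghostito}$ with a smaller Laistrygonian algebra.
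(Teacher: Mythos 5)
Your proposal is correct and follows essentially the same approach as the paper's proof: the same chain of subalgebras $R_1\subset\cdots\subset R_{\ghost+3}$, the same observation that $R_2$ is the Jordan plane and $R_3\cong R_2[z_\ghost;\sigma]$, and the same skew-derivation data on the $z$-generators (your $\sigma_\ghostito,\delta_\ghostito$ are the paper's with the index shifted by one, since you write the adjoined generator as $z_{\ghostito-1}$ rather than $z_{\ghostito}$). The paper likewise invokes Lemma~\ref{lema:subalgebra-lestrygon} to justify the existence of $\sigma_\ghostito$ and $\delta_\ghostito$ and concludes by a PBW-basis comparison, exactly as you sketch.
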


	\pf  
	It is well-known that  $R_2$ is an  Ore extension of $R_1$ and it follows easily that
	$R_3 \simeq R_2[X; \sigma_\ghost]$ where $\sigma_\ghost(x_1) = q^{-1} x_1$,  $\sigma_\ghost(x_2) = q^{-1} x_2$.
	Let  $\ghostito \in \I_{0, \ghost -1}$. 
	By the preceding discussion,  $R_{\ghost +3 - \ghostito}$ is a free $R_{\ghost + 2 - \ghostito}$-module with basis $(z_\ghostito^{n})_{n\in\N_0}$.
	Using Lemma \ref{lema:subalgebra-lestrygon} we check that there are  an  algebra automorphism 
	$\sigma_{\ghostito}$  and a $(\sigma_{\ghostito}, 1)$-derivation $\delta_{\ghostito}$ of $R_{\ghost +2 - \ghostito}$
	determined by  
	\begin{align*}
		\sigma_{\ghostito} (x_1) &= q^{-1} x_1, & \sigma_{\ghostito} (x_2) &= q^{-1} x_2,& 
		\sigma_{\ghostito} (z_i) &= q^{\ghostito - i} z_i, 
		\\
		\delta_{\ghostito} (x_1) &= 0, & \delta_{\ghostito} (x_2) &=  -q^{-1} z_{\ghostito + 1},& 
		\delta_{\ghostito} (z_i) &= 0,
	\end{align*}
	$i\in \I_{\ghostito +1, \ghost}$.
	Therefore 
	$R_{\ghost +3 - \ghostito}\simeq R_{\ghost +2 - \ghostito}[X; \sigma_{\ghostito}, \delta_{\ghostito}]$,  
	for all $j\in \I_{0,\ghost-1}$. \epf

	\subsection{Quotients of the Laistrygon}  The notion of exact sequence of Hopf algebras in braided tensor categories was first discussed in \cite{BeD}. 
	In the particular setting of braided Hopf algebras as in \cite{Tk}, the first reference we are aware of  is  \cite{andrus-natale}.
	We recall from \emph{loc. cit.} that the sequence of braided Hopf algebras and
	braided Hopf algebra morphisms
	\begin{align*}
		\xymatrix@C=10mm@R=9mm{   
			0\ar[r]&S \ar[r]^{\iota}& R\ar[r]^{\pi} &T \ar[r]&0}
	\end{align*}
	is exact if $\iota$ is injective, $\pi$ is surjective, $\ker\pi\overset{\star}{=}RS^{+}$ and $R^{{\rm co}\,\pi}=S$. A braided Hopf algebra $R$ fitting into the previous exact sequence is called an extension of $T$ by $S$.  Clearly $\star$ implies
	the equality $RS^{+} = S^{+}R$; when this last equality holds, we say that $S$ is normal in $R$.
	Notice that there are exact sequences where either $S$ or $T$, or both, are usual Hopf algebras but $R$ is braided in a strict sense.
	
	We now present $\toba(\lstr_q( 1, \ghost))$ as an extension of braided Hopf algebras.
	By \eqref{eq:jordan} and \eqref{eq:-1block+point}, we have that
	\begin{align*}
		\toba(\lstr_q( 1, \ghost)) x_1\toba(\lstr_q( 1, \ghost))=\toba(\lstr_q( 1, \ghost)) x_1.
	\end{align*}
	Hence $\ku [x_1]$ is a normal Hopf subalgebra of $\toba(\lstr_q( 1, \ghost))$ and
	\begin{align*}
		\quot{\ghost}{q} :=\toba(\lstr_q( 1, \ghost))/\toba(\lstr_q( 1, \ghost)) x_1\toba(\lstr_q( 1, \ghost))
	\end{align*}
	is a braided Hopf algebra quotient
	that fits into an exact sequence
	\begin{align*}
		0\to \ku [x_1]\to \toba(\lstr_q( 1, \ghost)) \overset{\varpi}{\longrightarrow}  \quot{\ghost}{q} \to 0
	\end{align*} 
	of braided Hopf algebras.
	Using the PBW-basis we see that $ \quot{\ghost}{q}$ is generated by  $x_2, (z_n)_{ n \in \I_{0,\ghost}}$ with defining relations  \eqref{eq:lstr-rels&11disc-2}, \eqref{eq:-1block+point-z2} and \eqref{eq:lstr-rels&11disc-qserre}. 
	Here and below we use the same notation for $x_2, z_n$ and their images in $ \quot{\ghost}{q}$.
	
	\medbreak
	The projection $\varpi$ above induces a map 
	$\Irr  \quot{\ghost}{q} \to \Irr \toba(\lstr_q( 1, \ghost))$. 
	
	\begin{lemma}\label{rem:irr-L(1,g)} The above map is bijective: 
		$\Irr \toba(\lstr_q( 1, \ghost)) \simeq \Irr  \quot{\ghost}{q}$.
	\end{lemma}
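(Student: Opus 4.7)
The plan is to verify that pullback along $\varpi$ gives the claimed bijection, the only nontrivial direction being that every finite-dimensional simple $\toba(\lstr_q(1,\ghost))$-module is annihilated by $x_1$. Well-definedness and injectivity of the map $\Irr \quot{\ghost}{q} \to \Irr \toba(\lstr_q(1,\ghost))$ are formal: since $\varpi$ is surjective, any $\toba(\lstr_q(1,\ghost))$-stable subspace of a module pulled back from $\quot{\ghost}{q}$ is automatically $\quot{\ghost}{q}$-stable, so pullback preserves simplicity; and two simple $\quot{\ghost}{q}$-modules with isomorphic pullbacks are already isomorphic as $\quot{\ghost}{q}$-modules.

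For surjectivity, take $M \in \Irr \toba(\lstr_q(1,\ghost))$. First I would observe that $\ker(x_1|_M)$ is a submodule of $M$. Indeed, \eqref{eq:rels B(V(1,2))} yields $x_1 x_2 = x_2 x_1 + \tfrac{1}{2}x_1^2$ while \eqref{eq:-1block+point} yields $x_1 z_n = q z_n x_1$, so for $v \in \ker x_1$ both $x_2 v$ and $z_n v$ again lie in $\ker x_1$. By simplicity of $M$, either $\ker x_1 = M$, in which case the action factors through $\varpi$ and we are done, or $x_1$ acts invertibly on $M$. The remaining task is to exclude the second alternative.

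An easy induction starting from the Jordan relation shows that $[x_1^k, x_2] = \tfrac{k}{2}\, x_1^{k+1}$ for every $k \geq 1$ (using that $x_1^2$ is central in the subalgebra generated by $x_1$). Taking the trace on $M$ kills the left-hand side by cyclicity and yields $\operatorname{tr}(x_1^n) = 0$ for every $n \geq 2$. If the distinct nonzero eigenvalues of $x_1$ on $M$ were $\lambda_1, \ldots, \lambda_r$ with multiplicities $m_i > 0$, the identities $\sum_i m_i \lambda_i^n = 0$ for $n = 2, \ldots, r+1$ would, after factoring out $\lambda_i^2$, form a Vandermonde system in the unknowns $m_i \lambda_i^2$, forcing $m_i = 0$ for all $i$ and producing a contradiction. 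Thus $x_1$ is nilpotent on $M$, so $\ker x_1 \neq 0$ and the first alternative holds. I expect the trace/Vandermonde maneuver to be the only substantive step; once nilpotence of $x_1$ on every finite-dimensional module is established, the bijection follows at once.
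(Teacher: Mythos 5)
Your proof is correct and follows the paper's own strategy: observe that $\ker x_1$ is a submodule (using \eqref{eq:rels B(V(1,2))} and \eqref{eq:-1block+point}) and that $x_1$ acts nilpotently on any finite-dimensional module, so that $x_1=0$ on every finite-dimensional simple module and the action factors through $\varpi$. The only difference is that the paper cites \cite[Lemma 2.1]{Na} for the nilpotency of $x_1$ on finite-dimensional Jordan-plane modules, whereas your commutator identity $[x_1^k,x_2]=\tfrac{k}{2}x_1^{k+1}$ together with the trace/Vandermonde maneuver is a self-contained proof of that cited fact.
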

	
	\pf
	
	By Remark \ref{ob-sub-jp}, $x_1$ and $x_2$ generate a subalgebra of $\toba(\lstr_q( 1, \ghost))$ isomorphic to the Jordan plane. Thus, by \cite[Lemma 2.1]{Na} we have that $x_1$ acts nilpotently on every finite-dimensional $\toba(\lstr_q( 1, \ghost))$-module; 
	but $\ker x_1$ is a submodule by the preceding, hence
	$x_1$ acts by 0 on every finite-dimensional simple $\toba(\lstr_q( 1, \ghost))$-module.
	\epf

	\begin{lemma}\label{lem:epi-algebra} Let $\ghost >1$. The map $\pi_{\ghost}:\toba(\lstr_q( 1, \ghost))\to \toba(\lstr_q( 1, \ghost-1))$
		given by 
		\begin{align*}
			&\pi_{\ghost}(x_i)=x_i,& &\pi_{\ghost}(z_j)=z_j,& &\pi_{\ghost}(z_{\ghost})=0,& &i\in \I_2,& &j\in \I_{0,\ghost-1},&
		\end{align*}
		is an algebra epimorphism.\qed
	\end{lemma}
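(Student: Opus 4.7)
The plan is to construct $\pi_{\ghost}$ as the algebra homomorphism induced by the universal property of the presentation of $\toba(\lstr_q( 1, \ghost))$ by generators and relations recalled in Section \ref{subsection:lstr-11disc}. That is, one defines $\pi_{\ghost}$ on the free algebra generated by $x_1, x_2, z_0, \dots, z_{\ghost}$ by the formulas above and checks that every one of the defining relations \eqref{eq:rels B(V(1,2))}, \eqref{eq:lstr-rels&11disc-1}, \eqref{eq:lstr-rels&11disc-2}, \eqref{eq:-1block+point-z2}, \eqref{eq:lstr-rels&11disc-qserre} maps to zero in $\toba(\lstr_q( 1, \ghost-1))$.

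The relations \eqref{eq:rels B(V(1,2))} and \eqref{eq:lstr-rels&11disc-1}, involving only $x_1, x_2, z_0$, are preserved verbatim, as is \eqref{eq:lstr-rels&11disc-2} for $n \in \I_{0, \ghost - 2}$ and \eqref{eq:-1block+point-z2} for $n \in \I_{0, \ghost - 2}$. The relation \eqref{eq:lstr-rels&11disc-2} with $n = \ghost - 1$ and the relation \eqref{eq:lstr-rels&11disc-qserre} map to zero simply because they contain the factor $z_{\ghost}$, which is sent to $0$.

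The only subtle case is \eqref{eq:-1block+point-z2} at the top index $n = \ghost -1$, namely $x_2 z_{\ghost - 1} - q z_{\ghost -1 } x_2 - z_{\ghost}$. Its image under $\pi_{\ghost}$ equals $x_2 z_{\ghost - 1} - q z_{\ghost - 1} x_2$ in $\toba(\lstr_q( 1, \ghost - 1))$, and this vanishes because it is exactly the top relation \eqref{eq:lstr-rels&11disc-qserre} in $\toba(\lstr_q( 1, \ghost - 1))$. This matching between the recursive relation at the truncation index in $\toba(\lstr_q( 1, \ghost))$ and the $q$-Serre-type relation at the top in $\toba(\lstr_q( 1, \ghost - 1))$ is the only substantive point, and is essentially the reason the lemma is true.

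Once well-definedness is established, surjectivity is automatic: the set $\{x_1, x_2, z_0, \dots, z_{\ghost - 1}\}$ generates $\toba(\lstr_q( 1, \ghost - 1))$ and each of its elements lies in the image of $\pi_{\ghost}$. I do not expect any serious obstacle; the computation is routine once one observes the matching between the top relations of the two algebras.
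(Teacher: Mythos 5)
Your proof is correct and is the only natural one: check the five families of defining relations on the free algebra, noting in particular that the sole nontrivial case, relation \eqref{eq:-1block+point-z2} at $n=\ghost-1$, is sent to the top relation \eqref{eq:lstr-rels&11disc-qserre} of $\toba(\lstr_q(1,\ghost-1))$, while the others either map to relations of the same name or acquire a factor $z_\ghost\mapsto 0$. The paper omits the verification entirely (marking the lemma with $\qed$), so your argument coincides with the intended one.
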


	Clearly $\ker \pi_{\ghost}=z_{\ghost}\toba(\lstr_q( 1, \ghost))$, thus we have an isomorphism of algebras
	\begin{align}
		\label{eq:zghost-quotient}
		\toba(\lstr_q( 1, \ghost))/z_{\ghost}\toba(\lstr_q( 1, \ghost))\simeq \toba(\lstr_q( 1, \ghost-1)).
	\end{align}

	
	\section{Simple Modules of \texorpdfstring{$\toba(\lstr_q( 1, \ghost))$}{} }\label{section:simple-modules}
	The purpose of this section is to give the classification of the finite-dimensional simple $\toba(\lstr_q( 1, \ghost))$-modules. We reduce this computation to those of the quantum plane, see Proposition \ref{prop:simple_quantum-plane}.

	\subsection{Simple modules of the quantum plane}\label{subsec:quantum-plane}
	Let $\ku_q[X,Y]$ denote the   algebra generated by $X$ and $Y$ with defining relation $XY- qYX$. 
	Then there is  a surjective algebra map $\nu_{\ghost}:\toba(\lstr_q( 1, \ghost))\to \ku_q[X,Y]$
	given by 
	\begin{align*}
		\nu_{\ghost}(x_1) &= \nu_{\ghost}(z_j)=0,&  j &\in \I_{\ghost},&\nu_{\ghost}(x_2) &=X,& \nu_{\ghost}(z_0) &= Y
	\end{align*}
	for any $\ghost \in \N$. Clearly  $\nu_{\ghost} = \nu_1\pi_2\ldots\pi_{\ghost}$, cf. Lemma \ref{lem:epi-algebra}.
	
	\medbreak
	If $q =1$, then $\ku_1[X,Y] = \ku[X,Y]$   is the polynomial ring in 2 variables; by Hilbert's Nullstellensatz
	its finite-dimensional simple modules are all one-dimensional and parametrized by the points of the plane.
	Assume that $q \neq 1$; then $\ku_q[X,Y]$ is called the quantum plane of parameter $q$.
	We recall the well-known classification of its finite-dimensional simple modules.
	First, there are the one-dimensional  $\ku_q[X,Y]$-modules $\ku^X_{a} = \ku$  with action
	$X \cdot 1 = a$, $Y \cdot 1 = 0$ and $\ku^Y_{a} = \ku$  with action  $X \cdot 1 = 0$, $Y \cdot 1=a$, for every 
	$a\in \ku^{\times}$.
	Second, suppose that $\ord q =:N < \infty$ and let $(e_i)_{i \in\I_N}$ be  the canonical basis of $\ku^N$. 
	Given $a,b\in \ku^{\times}$,  the $\ku_q[X,Y]$-module $\U_{a,b}$ is $\ku^N$ with the action defined by
	\begin{align*}
		X e_i  = aq^{i-1}e_i,& & Ye_j  =e_{j+1},& & Y e_N = be_1,& & i \in \I_{N},\, j\in \I_{N-1}.
	\end{align*}
	It is easy to see that $\U_{a,b}$ is simple.
	
	\begin{prop}\label{prop:simple_quantum-plane} Assume that $q \neq 1$.
		Let $V\in \Irr\ku_q[X,Y]$. 
		\begin{enumerate}[leftmargin=*,label=\rm{(\alph*)}]
			\item If $\dim V = 1$, then $V$ is isomorphic to $\ku^X_{a}$, or to  $\ku^Y_{a}$ for a unique $a\in \ku^{\times}$.
			
			\item If $\dim V > 1$, then  $\ord q =:N < \infty$ and $V \simeq \U_{a,b}$, 
			for unique $a,b\in \ku^{\times}$.
		\end{enumerate}
	\end{prop}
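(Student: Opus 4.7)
The plan for part (a) is straightforward: on a one-dimensional module $V$ the generators act by scalars $a,b\in\ku$, and the relation $XY-qYX=0$ collapses to $(1-q)ab=0$. Since $q\neq 1$, either $a=0$ (giving $\ku^Y_b$) or $b=0$ (giving $\ku^X_a$); uniqueness of the surviving parameter is read off the respective action, and the assumption $a\in\ku^\times$ in the statement excludes only the trivial case $a=b=0$.

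For part (b) my strategy is, first, to rule out kernels of $X$ and $Y$, then force $q$ to have finite order from finite-dimensionality of $V$, and finally apply Schur's lemma to two central elements. I would begin by observing that $\ker X$ and $\ker Y$ are both submodules: if $Xv=0$ then $X(Yv)=qYXv=0$, and the symmetric computation handles $\ker Y$. If $\dim V>1$, simplicity forces both kernels to be zero, so $X$ and $Y$ act invertibly on $V$.

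Next, pick a nonzero $X$-eigenvector $v$ with $Xv=\lambda v$, $\lambda\in\ku^\times$. Applying $Y$ repeatedly produces $X$-eigenvectors with eigenvalues $\lambda,q\lambda,q^2\lambda,\dots$; since $V$ is finite-dimensional and distinct eigenvalues yield linearly independent vectors, $q$ must have finite order, which I set as $N$, establishing the first claim of (b). Now $q^N=1$ implies that $X^N$ and $Y^N$ commute with both generators, so they lie in the center of $\ku_q[X,Y]$; by Schur's lemma they act on $V$ by scalars $\alpha,\beta\in\ku^\times$ (nonzero by invertibility). From $Y^Nv=\beta v$ it follows that $W=\mathrm{span}(v,Yv,\dots,Y^{N-1}v)$ is stable under $X$ (eigenvectors) and under $Y$ (the top vector wraps around to $\beta v$), hence a nonzero submodule; by simplicity $V=W$. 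The vectors $Y^jv$ sit in distinct $X$-eigenspaces and are therefore linearly independent, giving $\dim V=N$, and the assignment $e_i\mapsto Y^{i-1}v$ is an isomorphism $V\simeq \U_{\lambda,\beta}$.

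The conceptual heart of the proof, and the main obstacle, is the centrality-plus-Schur step that pins down $Y^N$ as a scalar: once $\beta$ is available, closure of the cyclic span and the identification with $\U_{\lambda,\beta}$ are essentially bookkeeping. For uniqueness, $\beta$ is recovered as the scalar by which the central element $Y^N$ acts on $V$; the $X$-spectrum of $V$ is the $\langle q\rangle$-orbit $\{\lambda,q\lambda,\dots,q^{N-1}\lambda\}$, from which $a$ is recovered, and I would have to verify that this recovery is consistent with the uniqueness formulation asserted in the statement.
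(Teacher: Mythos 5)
Your proof is correct and follows the same overall scaffold as the paper's --- kernel submodules force $X,Y$ invertible when $\dim V>1$; $\ord q$ is finite; $Y^N$ acts by a scalar; the cyclic span of $v,Yv,\dots,Y^{N-1}v$ is all of $V$ --- but two steps take genuinely different routes. To show $\ord q<\infty$ the paper takes determinants in $XY=qYX$, obtaining $(1-q^{\dim V})\det X\det Y=0$ and branching on whether $\det Y$ vanishes; you instead observe that $v,Yv,Y^2v,\dots$ are nonzero $X$-eigenvectors with eigenvalues $\lambda,q\lambda,q^2\lambda,\dots$, which would be pairwise distinct if $q$ had infinite order, contradicting $\dim V<\infty$. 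To obtain the scalar $\beta$ the paper produces a simultaneous eigenvector for the commuting operators $X$ and $Y^N$, whereas you note that $Y^N$ is central (since $X^N Y=q^N YX^N=YX^N$ and $\ord q=N$) and invoke Schur's lemma. Both work; your version avoids the case split and is the more conceptual derivation, while the paper's determinant step is marginally shorter.

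Your hedge about the uniqueness of $a$ deserves more than a hedge: the isomorphism class of $\U_{a,b}$ determines $(a,b)$ only up to $a\mapsto q^i a$. Indeed $e_1\mapsto f_N$, $e_i\mapsto b\, f_{i-1}$ for $i\ge 2$ defines an isomorphism $\U_{a,b}\to\U_{qa,b}$; both modules have $X$-spectrum $\{a,qa,\dots,q^{N-1}a\}$ and $Y^N$ acting by $b$, so there is no invariant separating them. The proposition's ``unique $a$'' should therefore be read modulo the $\langle q\rangle$-action on the first coordinate, and in the paper's own argument the resulting $a$ visibly depends on the choice of simultaneous eigenvector $v$. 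This is a small defect in the statement as written rather than a gap in your proof, and you were right to flag it.
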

	\pf Since $\ker X$ is a  $\ku_q[X,Y]$-submodule of $V$, then $\ker X=V$ or $0$. If $\ker X=V$, then $V=\langle v\rangle$ is one-dimensional, $Yv=av$ and $V\simeq \ku^Y_{a}$ for a unique $a\in \ku^{\times}$. If $\ker X=0$, then from $XY=qYX$ we see that 
	$(1-q^{\dim V}) \det Y =0$. If $\det Y=0$, then $V\simeq \ku^X_{a}$ for a unique $a\in \ku^{\times}$ as before. If $\det Y\neq 0$, then $q^{\dim V}=1$, so that $\ord q < \infty$. Since $XY^N = Y^{N}X$, there exist $v\in V - 0$ and $a,b\in \ku^{\times}$ such that $Xv=av$ and $Y^N v=bv$. Therefore $V= \langle Y^iv: i \in \I_{0,N-1}\rangle$ and consequently $V\simeq \U_{a,b}$. \epf
	
	\begin{remark}
		The infinite-dimensional simple $\ku_q[X,Y]$-modules were computed in \cite{Ba} using results from \cite{BO}.
	\end{remark}

	\subsection{Finite-dimensional simple modules}
	We proceed now with the classification of the finite-dimensional simple $\toba(\lstr_q( 1, \ghost))$-modules. 
	
	Recall that $ \quot{\ghost}{q}$ is generated by  $x_2, (z_n)_{ n \in \I_{0,\ghost}}$ with defining relations  \eqref{eq:lstr-rels&11disc-2}, \eqref{eq:-1block+point-z2} and \eqref{eq:lstr-rels&11disc-qserre}. The relations \eqref{eq:-1block+point-z2} and \eqref{eq:lstr-rels&11disc-qserre} implies that
	\begin{align}\label{eq1}
		z_{\ghost-1}x_2^j=q^{-j}x_2^jz_{\ghost -1}-jq^{-j}x_2^{j-1} z_{\ghost}, && j \in \N.
	\end{align}
	
	We start with an auxiliary result.
	\begin{lemma} \label{aux1}
		Let $V \in \rep  \quot{\ghost}{q}$, $n = \dim V$. If the action of $z_{\ghost}$ is invertible, 
		then the actions of $z_{\ghost-1}$, $x_2$ are invertible and $q^n = 1$.
	\end{lemma}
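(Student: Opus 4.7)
The plan is to prove in order: $x_2$ is invertible, $z_{\ghost-1}$ is invertible, and $q^n=1$. The last assertion is immediate once the first two are in hand, since \eqref{eq:lstr-rels&11disc-qserre} then forces $\det(x_2)\det(z_\ghost)=q^n\det(z_\ghost)\det(x_2)$ with all factors nonzero.

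The core of the argument is the invertibility of $x_2$. I will study the generalized $0$-eigenspace $V_0=\bigcup_k \ker x_2^k$. It is plainly $x_2$-stable, and it is $z_\ghost$-stable because iterating \eqref{eq:lstr-rels&11disc-qserre} yields $x_2^k z_\ghost=q^k z_\ghost x_2^k$. The delicate point is that $V_0$ is also $z_{\ghost-1}$-stable: rearranging \eqref{eq1} as $x_2^k z_{\ghost-1}=q^k z_{\ghost-1}x_2^k+kx_2^{k-1}z_\ghost$ and evaluating at $v\in\ker x_2^k$ gives $x_2^k z_{\ghost-1}v=kx_2^{k-1}z_\ghost v$; applying $x_2$ once more and invoking $x_2^k z_\ghost=q^k z_\ghost x_2^k$ collapses this to $x_2^{k+1}z_{\ghost-1}v=0$, so $z_{\ghost-1}v\in V_0$. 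Now suppose $V_0\neq 0$ and let $N$ be the nilpotency index of $x_2|_{V_0}$. Plugging $j=N$ into \eqref{eq1} viewed as an identity on $V_0$ kills the left-hand side and the first right-hand term, leaving $Nq^{-N}x_2^{N-1}z_\ghost|_{V_0}=0$. Since $z_\ghost$ restricts to an invertible operator on the finite-dimensional invariant subspace $V_0$ and $\car\ku=0$, this forces $x_2^{N-1}|_{V_0}=0$, contradicting the minimality of $N$. Hence $V_0=0$ and $x_2$ is invertible on $V$.

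For the invertibility of $z_{\ghost-1}$ I will run the mirror argument. A direct induction from \eqref{eq:-1block+point-z2} and \eqref{eq:lstr-rels&11disc-2} produces the twin identity $x_2 z_{\ghost-1}^j=q^j z_{\ghost-1}^j x_2+j z_\ghost z_{\ghost-1}^{j-1}$. Then $W_0=\bigcup_k\ker z_{\ghost-1}^k$ is $z_{\ghost-1}$-stable by construction, is $z_\ghost$-stable because $z_{\ghost-1}^k z_\ghost=q^{-k}z_\ghost z_{\ghost-1}^k$, and is $x_2$-stable by the twin identity applied in the same way as above. Taking $j$ equal to the nilpotency index of $z_{\ghost-1}|_{W_0}$ in the twin identity and using invertibility of $z_\ghost$ on $W_0$ together with $\car\ku=0$ produces the analogous contradiction, whence $W_0=0$.

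The main obstacle is the invariance of the two generalized null spaces under the cross-operators, namely $z_{\ghost-1}$ on $V_0$ and $x_2$ on $W_0$; the naive kernels $\ker x_2$ and $\ker z_{\ghost-1}$ themselves are not preserved (for instance $x_2 z_{\ghost-1}v=z_\ghost v\neq 0$ for nonzero $v\in\ker x_2$), so the iterated commutation identities derived from \eqref{eq1} and its twin are exactly what rescues the argument and lets the nilpotency-index contradiction go through.
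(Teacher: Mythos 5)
Your proof is correct. It relies on the same key identity as the paper's proof, namely \eqref{eq1} (and its twin), together with characteristic $0$ and the $z_\ghost$-stability of the relevant kernel, but it packages the contradiction differently. The paper proves the invertibility of $z_{\ghost-1}$ directly (and says the $x_2$ case is ``similar''): it picks a $z_\ghost$-eigenvector $v_0$ in $\ker z_{\ghost-1}$, using that this kernel is $z_\ghost$-stable by \eqref{eq:lstr-rels&11disc-2}, forms the ladder $v_j = x_2^j v_0$, deduces $z_{\ghost-1} v_j = -j\lambda q^{-j} v_{j-1}$ from \eqref{eq1}, and reaches a contradiction from a minimal linear dependency among the $v_j$. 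You instead restrict to the generalized nullspaces $V_0$ and $W_0$, establish that all three of $x_2, z_{\ghost-1}, z_\ghost$ preserve them, and then read off from \eqref{eq1} (resp.\ the twin identity) that the nilpotency index would have to drop — a nilpotency-index argument rather than an explicit ladder-and-minimality argument. The extra work on your side is the stability of $V_0$ under $z_{\ghost-1}$ and of $W_0$ under $x_2$, which you correctly supply via the rearranged commutation formulas. Finally, the paper obtains $q^n=1$ from \eqref{eq:lstr-rels&11disc-2} whereas you use \eqref{eq:lstr-rels&11disc-qserre}; both work once the respective operators are known to be invertible. In short, same idea, different but equally elementary implementation, and yours spells out both invertibility claims in full.
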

	\pf We prove that $z_{\ghost-1}$ is invertible; the proof for $x_2$ is similar. Suppose that $\ker z_{\ghost-1}\neq 0$. 
	Note that $z_{\ghost} \ker  z_{\ghost-1}\subseteq \ker z_{\ghost-1}$ by \eqref{eq:lstr-rels&11disc-2}. Hence, there exist $\lambda\in \ku^{\times}$ and
	$0 \neq v_0\in \ker z_{\ghost-1}$ such that $z_{\ghost} v_0=\lambda v_0$. Let $v_j:=x_2^j v_0$,  $j\in \N_0$.
	By \eqref{eq:lstr-rels&11disc-qserre},  $z_{\ghost} v_j = \lambda q^{-j} v_j$.
	By \eqref{eq1},
	\begin{align}\label{actionz0-new1}
		z_{\ghost-1} v_j &= -j \lambda q^{-j} v_{j-1}, & j&\in \N.
	\end{align}
	(This is also valid for $j =0$ if we agree that $v_{-1} =0$).
	Since $\dim V < \infty$, there exists $m\in \N$
	such that $v_m \in \langle v_j:  j \in \I_{0, m-1} \rangle$. Pick $m$ minimal (here we use that $v_0 \neq 0$) and write 
	$v_m = \sum_{j \in \I_{0, m-1}} a_j v_j$. Applying $-z_{\ghost-1}$, we see from \eqref{actionz0-new1}
	that
	\begin{align*}
		m \lambda q^{-m}  v_{m-1} &= \sum_{j \in \I_{0, m-1}} j \lambda q^{-j}a_j  v_{j-1}.
	\end{align*}
	Since $\lambda \neq 0$, we conclude that $v_{m-1} \in \langle v_j: j \in \I_{0, m-2} \rangle$,
	a contradiction to the minimality of $m$. Hence $z_{\ghost-1}$ is invertible.
	From \eqref{eq:lstr-rels&11disc-2} follows that 
	$(1-q^{n})\det z_{\ghost}\det z_{\ghost-1}=0$ and consequently $q^n =1$.
	\epf

	\begin{lemma}\label{lema:aux2}
		Let $V\in \Irr  \quot{\ghost}{q}$. Then $z_{\ghost}=0$ on $V$.
	\end{lemma}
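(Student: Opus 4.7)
The plan is as follows. First I would verify that $\ker z_{\ghost}$ is a $\quot{\ghost}{q}$-submodule of $V$. Indeed, \eqref{eq:lstr-rels&11disc-qserre} gives $z_{\ghost}(x_2 v) = q^{-1} x_2 z_{\ghost} v$, and \eqref{eq:bracket ztzk} gives $z_{\ghost}(z_m v) = q^{\ghost - m} z_m z_{\ghost} v$ for $m \in \I_{0, \ghost-1}$; both expressions vanish whenever $z_{\ghost} v = 0$, so every generator of $\quot{\ghost}{q}$ preserves $\ker z_{\ghost}$. By simplicity of $V$, either $\ker z_{\ghost} = V$, in which case $z_{\ghost}=0$ and the lemma is proved, or $\ker z_{\ghost} = 0$, i.e.\ $z_{\ghost}$ acts invertibly on $V$.

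The heart of the argument is to rule out the invertible case by a Weyl-algebra-type trace obstruction. From \eqref{eq:lstr-rels&11disc-qserre} one extracts the operator identity $x_2 z_{\ghost}^{-1} = q^{-1} z_{\ghost}^{-1} x_2$ on $V$. Then, starting from the recursion \eqref{eq:-1block+point-z2} with $n = \ghost-1$, namely $x_2 z_{\ghost-1} - q z_{\ghost-1} x_2 = z_{\ghost}$, and multiplying on the right by $z_{\ghost}^{-1}$, a short manipulation should yield
\begin{equation*}
[x_2,\, z_{\ghost-1}\, z_{\ghost}^{-1}] = \id_V.
\end{equation*}
Taking the trace of both sides then gives $0 = \dim V$, contradicting $V \neq 0$; this uses $\car \ku = 0$.

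The only step calling for insight is the choice to pair $x_2$ with $z_{\ghost-1} z_{\ghost}^{-1}$; the remainder is mechanical, since the commutator of two endomorphisms of a finite-dimensional vector space is traceless. I note that Lemma \ref{aux1} is not logically indispensable here—only the invertibility of $z_{\ghost}$ is needed to form $z_{\ghost}^{-1}$—although it fits naturally with the paper's flow and could be invoked to record that $z_{\ghost-1}$ and $x_2$ are automatically invertible and that $q^{\dim V} = 1$ along the way.
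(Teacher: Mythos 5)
Your proof is correct, and it is genuinely different from -- and considerably shorter than -- the one in the paper. Both start identically: $\ker z_{\ghost}$ is a submodule (using \eqref{eq:lstr-rels&11disc-qserre} and \eqref{eq:bracket ztzk}), so simplicity forces $z_{\ghost}=0$ or $z_{\ghost}$ invertible, and one must rule out the latter. From there the paper invokes Lemma \ref{aux1} to conclude $q^{\dim V}=1$ and that $z_{\ghost-1}$ and $x_2$ are also invertible, decomposes $V$ into $z_{\ghost}$-eigenspaces permuted cyclically by $z_{\ghost-1}$ and $x_2$, writes all three operators as $N\times N$ block matrices, and only then applies \eqref{eq:-1block+point-z2} and a trace computation to force $\lambda=0$. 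Your route short-circuits all of this: from $x_2 z_{\ghost}=qz_{\ghost}x_2$ you get $x_2 z_{\ghost}^{-1}=q^{-1}z_{\ghost}^{-1}x_2$, and then multiplying \eqref{eq:-1block+point-z2} at $n=\ghost-1$ on the right by $z_{\ghost}^{-1}$ indeed gives
\begin{align*}
x_2\,(z_{\ghost-1}z_{\ghost}^{-1}) = q z_{\ghost-1}x_2 z_{\ghost}^{-1} + \id_V = z_{\ghost-1}z_{\ghost}^{-1}x_2 + \id_V,
\end{align*}
so $[x_2, z_{\ghost-1}z_{\ghost}^{-1}]=\id_V$ on $V$, and taking traces yields $0=\dim V$, impossible since $\car\ku=0$. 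This bypasses Lemma \ref{aux1}, the root-of-unity dichotomy, and the eigenspace/block-matrix bookkeeping entirely. Both proofs ultimately rest on the same trace obstruction in characteristic zero, but yours isolates it at the operator level rather than the matrix level, which is cleaner and also makes transparent exactly which relations are used (only \eqref{eq:lstr-rels&11disc-qserre} and \eqref{eq:-1block+point-z2} at $n=\ghost-1$). The one thing the paper's approach buys in exchange is the explicit eigenspace structure, but that information is not needed for the conclusion.
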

	
	\pf Let $V \in \Irr  \quot{\ghost}{q}$, $n=\dim V$. Then $\ker z_{\ghost}$ is a submodule of
	$V$ by \eqref{eq:lstr-rels&11disc-qserre} and \eqref{eq:bracket ztzk};
	consequently $\ker z_{\ghost}=0$ or $\ker z_{\ghost}=V$. Suppose that  $\ker z_{\ghost}=0$. 
	Then $q^n=1$ and $\ker z_{\ghost -1} = \ker x_2=0$ by Lemma \ref{aux1}. Hence $n=lN$, where $N=\ord q$ and $l \in \N$. Let $\lambda\in \ku^{\times}$ an eigenvalue of $z_{\ghost}$ and $\lambda_j:=\lambda q^j$, $j\in \Z$. 
	Let $V^{\lambda} = \ker (z_{\ghost }- \lambda)$ denote the eigenspace of eigenvalue $\lambda$.
	By \eqref{eq:lstr-rels&11disc-2}, $z_{\ghost -1}V^{\lambda_j} \subseteq V^{\lambda _{j+1}}$, $j \in \I_{0, N-1}$. Since $z_{\ghost -1}$ is invertible, $z_{\ghost -1}V^{\lambda_j} = V^{\lambda_{j+1}}$. Similarly, $x_2V^{\lambda_j} = V^{\lambda_{j-1}}$. Thus, $V= V^{\lambda_{0}}\oplus\ldots\oplus  V^{\lambda_{N-1}}$ and $\dim  V^{\lambda_{j}}=l$. Let $\mathtt{B}_{\lambda}=\{v_j\,:\,j\in \I_l\}$ be a basis of $V^{\lambda}$. Then $\mathtt{B}=\cup_{i\in \I_{0,N-1}}z_{\ghost -1}^{i}\mathtt{B}_{\lambda}$ is a basis of $V$ and the actions of $z_{\ghost -1}$, $z_{\ghost}$ and $x_2$ in this basis are given, respectively, by the following matrices
	\begin{align*}
		&\begin{pmatrix}
			0 & \ldots & 0 & A\\
			\id_l & \ldots & 0 & 0 \\
			\vdots & \ddots & \vdots & \vdots\\
			0 & \ldots & \id_l & 0
		\end{pmatrix}, &  
		&\begin{pmatrix}
			\lambda\id_l  & \ldots & 0\\
			0 & \ldots & 0\\
			\vdots & \ddots & \vdots\\
			0 &  \ldots & \lambda_ {N-1}\id_l
		\end{pmatrix}, &
		&\begin{pmatrix}
			0 & B_2 & \ldots & 0\\
			\vdots & \vdots & \ddots & \vdots\\
			0 & 0 & \ldots & B_N\\
			B_1 & 0 & \ldots & 0
		\end{pmatrix},
	\end{align*}
	with $A, B_i \in \Gln_l(\ku)$, $i \in \I_{N}$. By \eqref{eq:-1block+point-z2}, we have for all $ j \in \I_{2,N-1}$,
	\begin{align*}
		B_2 &= qAB_1+\lambda \id_l,&  
		B_{j+1} &= qB_j+\lambda_{j-1}\id_l, &
		B_1A &=qB_N+\lambda_{N-1}\id_l.
	\end{align*}
	Arguing inductively we see that $AB_1+N\lambda_{N-1}\id_l=B_1A$. Applying the trace map to this  identity we get that $n\lambda_{N-1}=0$, thus $\lambda=0$, a contradiction.
	\epf
	
	\begin{theorem}\label{thm:irredu-lest} $\Irr \toba(\lstr_q( 1, \ghost)) \simeq \Irr  \ku_q[X,Y].$
	\end{theorem}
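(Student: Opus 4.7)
The plan is to combine Lemma \ref{rem:irr-L(1,g)} with an iterated application of Lemma \ref{lema:aux2}, by induction on $\ghost$, to show that every finite-dimensional simple $\toba(\lstr_q(1,\ghost))$-module factors through the surjection $\nu_{\ghost}$ onto the quantum plane $\ku_q[X,Y]$. Once this is in place, the classification reads off immediately from Proposition \ref{prop:simple_quantum-plane}, and the converse direction is trivial pull-back along $\nu_{\ghost}$.

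In more detail, the first step is to invoke Lemma \ref{rem:irr-L(1,g)} to reduce the problem to describing $\Irr \quot{\ghost}{q}$. Given $V \in \Irr \quot{\ghost}{q}$, Lemma \ref{lema:aux2} forces $z_{\ghost}$ to act as zero on $V$, so $V$ descends to a simple module of the quotient $\quot{\ghost}{q}/z_{\ghost}\quot{\ghost}{q}$. The next step is to identify this quotient with $\quot{\ghost-1}{q}$: using \eqref{eq:zghost-quotient} together with the observation that both $x_1\toba(\lstr_q(1,\ghost))$ and $z_{\ghost}\toba(\lstr_q(1,\ghost))$ are two-sided ideals (which follows from \eqref{eq:-1block+point}, \eqref{eq:bracket ztzk}, \eqref{eq:lstr-rels&11disc-qserre} and \eqref{eq:jordan}), the two quotient operations commute and produce $\quot{\ghost-1}{q}$. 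Iterating, I descend through $\quot{\ghost-1}{q},\ldots,\quot{1}{q}$, at each stage applying Lemma \ref{lema:aux2} to the current Laistrygonian quotient to force its top generator to act trivially on $V$. What survives, after $x_1$ and all $z_n$ with $n\ge 1$ have been killed, is a simple module over the algebra generated by the images of $x_2$ and $z_0$ subject to the single surviving relation $x_2 z_0 = q z_0 x_2$, i.e.\ over $\ku_q[X,Y]$ via the map induced by $\nu_{\ghost}$.

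The reverse correspondence is immediate: any $W \in \Irr \ku_q[X,Y]$ pulls back along $\nu_{\ghost}$ to a simple $\toba(\lstr_q(1,\ghost))$-module, and the two procedures are manifestly mutually inverse on isomorphism classes. I do not anticipate any serious obstacle; the genuine work has been done in Lemma \ref{lema:aux2}, and the only point requiring bookkeeping is the inductive identification $\quot{\ghost}{q}/z_{\ghost}\quot{\ghost}{q} \simeq \quot{\ghost-1}{q}$, on which the descent turns.
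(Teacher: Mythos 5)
Your argument is correct and follows essentially the same route as the paper: reduce to $\Irr \quot{\ghost}{q}$ via Lemma \ref{rem:irr-L(1,g)}, apply Lemma \ref{lema:aux2} to kill $z_{\ghost}$, identify the resulting quotient with $\quot{\ghost-1}{q}$, and iterate down to the quantum plane. The extra bookkeeping you flag (that $z_{\ghost}\toba(\lstr_q(1,\ghost))$ is a two-sided ideal and that the two quotient operations commute) is exactly the observation $\quot{\ghost}{q} z_{\ghost} \quot{\ghost}{q} = z_{\ghost}\quot{\ghost}{q}$ made in the published proof.
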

	\pf Let  $V \in \Irr \toba(\lstr_q( 1, \ghost))$. By Lemma \ref{rem:irr-L(1,g)}, $V \in \Irr  \quot{\ghost}{q}$ and thus by Lemma \ref{lema:aux2}, $z_{\ghost}=0$ on $V$.  Notice that $ \quot{\ghost}{q} z_{\ghost} \quot{\ghost}{q}=z_{\ghost} \quot{\ghost}{q}$. Since $ \quot{\ghost-1} {q}
	=   \quot{\ghost}{q}/ \quot{\ghost}{q} z_{\ghost} \quot{\ghost}{q}$, using Lemma \ref{lema:aux2} again, we conclude that $z_{\ghost-1}=0$ on $V$. Repeating this $\ghost$-times, we see that $V \in \Irr \ku_q[X,Y]$. \epf

	\section{Twisting and isomorphisms}\label{section:twisting}

	\subsection{Twisting}
	
	In this Subsection, following \cite{AS1}, we use the term twisting to refer to the twisting of the multiplication
	introduced in \cite{DT} which is dual to the twisting of the comultiplication in an appropriate sense.
	Precisely, let $H$ be a Hopf algebra  and  $\sigma : H \otimes H \to \ku$ be an invertible 2-cocycle. Consider the Hopf algebra $H_{\sigma}$ which has the same coalgebra structure of $H$ and multiplication  given by
	\begin{align}\label{eq:twist-mult}
		x \cdot_{\sigma }y &= \sigma(x\_{1}, y\_{1})x\_{2}y\_{2}\sigma^{-1}(x\_{3}, y\_{3}), &x,y &\in H;
	\end{align}
	$H_{\sigma}$ is obtained by twisting the multiplication of $H$.
	
	\smallbreak
	We start by a definition implicit in \cite[\S 2.4]{AS1}.
	Let $R$ be a Hopf algebra in $\yd{H}$, $A := R \#H$, $\pi : A \to H$ and $\iota: H \to A$ 
	be the canonical projection and injection. Define $\sigma^\pi : A \otimes  A \to \ku$ by $\sigma^\pi :=\sigma(\pi \otimes \pi)$. Since
	the maps $\pi : A_{\sigma^{\pi}} \to H_{\sigma}$ and $\iota: H_{\sigma} \to A_{\sigma^{\pi}}$ are still Hopf algebra maps
	and the comultiplication is not changed, $A_{\sigma^{\pi}} \simeq R_{\sigma} \# H_{\sigma}$
	where $R_{\sigma}$  is a Hopf algebra in $\yd{H_{\sigma}}$ that coincides with $R$ as vector subspace of $A$, with multiplication
	\begin{align}\label{eq:twist-mult-R}
		x\cdot_{\sigma } y &= \sigma(x\_{0}, y\_{0})x\_{1}y\_{1}, & x,y &\in R_{\sigma}.
	\end{align}

	\begin{definition} Let $R$ and $S$  be braided Hopf algebras in the sense of \cite{Tk}.
		We say that $R$ and $S$ are \emph{twist-equivalent}
		if there exist a Hopf algebra $H$ and an invertible 2-cocycle $\sigma : H \otimes H \to \ku$ such that
		\begin{itemize}[leftmargin=*]
			\item $R$ is realizable in $\yd{H}$;

			\item $S$ is isomorphic to $R_{\sigma}$ as a braided Hopf algebra.
		\end{itemize}
	\end{definition}
	
	The notion of twist-equivalence is useful for classification purposes.
	
	\begin{lemma} \label{lema:twisting-nichols}
		\cite[Lemma 2.13]{AS1} Let $H$ and $\sigma$ be as above.
		If $R = \oplus_{n\in \N_0}R(n)$ is a  graded Hopf algebra in $\yd{H}$, then
		$R_{\sigma}$ is a  graded Hopf algebra in $\yd{H_{\sigma}}$
		with $R(n)=R_{\sigma}(n)$ as vector spaces, for all $n \geq 0$. 
		Moreover,  $R$ is a Nichols algebra if and only if $R_\sigma$ is a Nichols algebra. \qed
	\end{lemma}

	We recall  that two matrices  $\bq = (q_{ij})_{i,j \in \I_{\theta}}$ and 
	$\bq' = (q'_{ij})_{i,j \in \I_{\theta}}$ with entries in $\kut$
	are \emph{twist-equivalent} if 
	\begin{align*}
		q_{ii} &= q'_{ii}& &\text{and}&  q_{ij}q_{ji} &= q'_{ij}q'_{ji},& &\text{for all } i \neq j \in \I_{\theta}.
	\end{align*}
	See \cite[Definition 3.8]{AS1}.
	Suppose that this is the case.
	Let $V$ and $V'$ be braided vector spaces of diagonal type with braiding matrices $\bq$ and $\bq'$ respectively.
	Then \cite[Proposition 3.9]{AS1} essentially says that the Nichols algebras $\toba(V)$ and $\toba(V')$
	are twist-equivalent. Our first goal in this Subsection is to extend this result to a class of braided vector spaces of dimension 3.
	
	\medbreak
	More precisely, let $(q_{ij})_{i,j \in \I_2}$ be a matrix of non-zero scalars and $a\in \ku$. Let
	$\Vs$ be the braided vector space with basis $(x_i)_{i\in\I_3}$ and braiding
	\begin{align}\label{eq:braiding-block-point}
		(c(x_i \otimes x_j))_{i,j\in \I_3} &=
		\begin{pmatrix}
			q_{11} x_1 \otimes x_1&  q_{11}(x_2 + x_1) \otimes x_1&  q_{12} x_3  \otimes x_1
			\\
			q_{11} x_1 \otimes x_2 & q_{11} (x_2 + x_1) \otimes x_2& q_{12} x_3  \otimes x_2
			\\
			q_{21} x_1 \otimes x_3 &  q_{21} (x_2 +a x_1) \otimes x_3&  q_{22} x_3  \otimes x_3
		\end{pmatrix}.
	\end{align} 
	We realize $\Vs$ in $\yd{\Z^{2}}$ as follows. If $\alpha_{1}, \alpha_{2}$ is the canonical basis
	of $\zdos$, then the action of $\zdos$ on $\Vs$ and the $\zdos$-grading are given by
	\begin{align}\label{eq:YD-structure}
		\begin{aligned}
			\alpha_{1} \rightharpoonup x_1 &= q_{11} x_{1}, & \alpha_{1} \rightharpoonup x_2 &= q_{11} (x_2 + x_1), & \alpha_{1} \rightharpoonup x_3 &= q_{12} x_{3};
			\\
			\alpha_{2} \rightharpoonup x_1 &= q_{21} x_{1}, & \alpha_{2} \rightharpoonup x_2 &= q_{21} (x_2 + a x_1), & \alpha_{2} \rightharpoonup x_3 &=  q_{22} x_{3};
			\\
			\deg x_1 &= \alpha_{1}, & \deg x_2 &= \alpha_{1}, & \deg x_3 &= \alpha_{2}.
		\end{aligned}
	\end{align}
	Then the Nichols algebra $\toba(\Vs)$ is a Hopf algebra in $\yd{\Z^{2}}$
	and we may consider the bosonization $\cA = \toba(\Vs) \# \ku \zdos$, used in the proof below.
	
	\medbreak
	Let $\Vs'$ be the braided vector space with basis $(x_i)_{i\in\I_3}$ and braiding \eqref{eq:braiding-block-point}
	but with respect to $(q'_{ij})_{i,j \in \I_2}$  and the same $a\in \ku$. 
	Assume that $(q_{ij})$ and $(q'_{ij})$ are twist-equivalent, i.~e. 
	$q_{11} = q'_{11}$, $q_{22} = q'_{22}$ and $q_{12}q_{21} = q'_{12}q'_{21}$.

	\begin{lemma} \label{lem:twisting}
		The Nichols algebras $\toba(\Vs)$ and $\toba(\Vs')$ are twist-equivalent.
	\end{lemma}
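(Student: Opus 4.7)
The plan is to produce a bicharacter $\sigma$ on $\zdos$, viewed as a $2$-cocycle on $\ku \zdos$, whose associated twist carries the Yetter-Drinfeld structure of $\Vs$ to that of $\Vs'$, and then to invoke the general principle that cocycle twists of the base Hopf algebra induce twist-equivalences of the resulting Nichols algebras.

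Setting $t := q'_{12}/q_{12} \in \kut$, the hypothesis $q_{12}q_{21} = q'_{12}q'_{21}$ reads $q'_{21}/q_{21} = t^{-1}$. I would then define a bicharacter $\sigma: \zdos \times \zdos \to \kut$ on generators by $\sigma(\alpha_i, \alpha_i) = 1$ for $i = 1,2$, and $\sigma(\alpha_1, \alpha_2), \sigma(\alpha_2, \alpha_1)$ chosen so that the twist formula applicable to $\ydzdos$ (multiplication by the appropriate ratio of $\sigma$-values on each $|x_i|\otimes|x_j|$ block, as in \cite[\S 2.4]{AS1}) converts the off-diagonal scalars of \eqref{eq:braiding-block-point} from $q_{12}, q_{21}$ into $q'_{12}, q'_{21}$; concretely one can take $\sigma(\alpha_1, \alpha_2) = t$, $\sigma(\alpha_2, \alpha_1) = 1$, extended bimultiplicatively. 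Because $\zdos$ is abelian, $\sigma$ is automatically a $2$-cocycle on $\ku \zdos$.

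The core of the argument is then to verify that the cocycle twist of $\Vs$ by $\sigma$ really gives $\Vs'$. The $\zdos$-grading of \eqref{eq:YD-structure} is untouched by the twist, so only the braiding matters. A direct inspection of the nine entries of \eqref{eq:braiding-block-point} proceeds as follows: the diagonal entries $(i,i)$ are fixed because $\sigma(\alpha_i, \alpha_i) = 1$ and $q_{ii} = q'_{ii}$; the entries within the block $\{x_1, x_2\}$ are fixed because $|x_1| = |x_2| = \alpha_1$ and $q_{11}(x_2 + x_1)$ is common to both braidings; and the four entries coupling $\{x_1, x_2\}$ with $x_3$ pick up exactly the factor $t^{\pm 1}$ needed to produce the braiding of $\Vs'$. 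In particular the block-by-point entry $c(x_3 \otimes x_2) = q_{21}(x_2 + a x_1) \otimes x_3$ is transported to $q'_{21}(x_2 + a x_1) \otimes x_3$, because $\sigma$ depends only on $\zdos$-degrees, so the Jordan coefficient $a$ survives unchanged.

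Finally, the lemma follows by invoking the standard result (\cite[\S 2.4]{AS1}, and more generally \cite{AAM}) that two objects of $\ydh$ related by a Hopf-algebraic $2$-cocycle twist yield twist-equivalent Nichols algebras. The main technical obstacle, and where essentially all the work lies, is the verification in the preceding paragraph: one must be careful that the $\sigma$-twist is consistent with the non-diagonal Jordan pieces $(x_2 + x_1)$ and $(x_2 + a x_1)$ appearing in \eqref{eq:braiding-block-point}. This is precisely where the choice of grading $|x_1| = |x_2| = \alpha_1$ pays off: the bicharacter treats the two basis vectors uniformly, so the Jordan block structure is preserved while only the $x_3$-coupling scalars are modified in the required way.
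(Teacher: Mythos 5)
Your proposal is correct and follows essentially the same route as the paper's proof: define a bicharacter $\sigma$ on $\zdos$ (the paper works with general $p_{ij}$ and you pick the concrete values $p_{12}=t$, $p_{21}=1$), twist $T(\Vs)\#\ku\zdos$ by $\sigma^\pi$, observe that only the off-diagonal $\{x_1,x_2\}$-versus-$x_3$ entries of the braiding acquire the factors $p_{12}p_{21}^{-1}=t$ and $p_{21}p_{12}^{-1}=t^{-1}$ while the Jordan block is untouched, and conclude via \cite[\S 2.4, Prop.\ 3.9]{AS1}. The only difference is cosmetic: the paper keeps $p_{12},p_{21}$ as free parameters and reads off the condition afterward, whereas you normalize $p_{21}=1$ from the start.
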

	
	\pf  We argue as in \cite[Lemma 2.12]{AS1}.
	Let $(p_{ij})_{i,j \in \I_2} \in (\ku^{\times})^{2 \times 2}$.
	Let $\sigma: \zdos \times \zdos \to \ku^{\times}$ be the  bilinear form, hence a 2-cocycle, given by
	$\sigma(\alpha_{i}, \alpha_{j}) =   p_{ij}$, that
	we extend to an invertible 2-cocycle $\sigma: \ku \zdos \otimes \ku \zdos \to \ku$
	with the same name.
	Let us twist the multiplication of  $\cA$ by the cocycle $\sigma^{\pi} := \sigma(\pi \ot \pi)$
	where $\pi: \cA \to \ku\zdos$ is the natural projection. 
	Then $\cA_{\sigma^{\pi}} = \toba(\Vs)_{\sigma} \# \ku \zdos$ where 
	$\toba(\Vs)_{\sigma} \in \yd{ \Z^{2}}$ has  the same $\N_0$-grading as $\toba(\Vs)$. 
	As object of  $\yd{\cA_{\sigma}}$,  the  coaction of $\ku \zdos $ on $\toba(\Vs)_{\sigma}$ (i.e. the $\zdos$-grading)
	coincides with the coaction on $\toba(\Vs)$,
	while the action of $\ku\zdos$ on $\toba(\Vs)_{\sigma}$ is determined by
	\begin{align}\label{eq:YD-structure-twisted}
		\begin{aligned}
			\alpha_{1} \rightharpoonup_{\sigma} x_1 &= q_{11} x_{1}, & 
			\alpha_{2} \rightharpoonup_{\sigma} x_1 &= p_{21}  p_{12}^{-1} q_{21} x_{1},
			\\
			\alpha_{1} \rightharpoonup_{\sigma} x_2 &= q_{11} (x_2 + x_1), & 
			\alpha_{2} \rightharpoonup_{\sigma} x_2 &= p_{21}  p_{12}^{-1} q_{21} (x_2 + a x_1), 
			\\\alpha_{1} \rightharpoonup_{\sigma} x_3 &= p_{12}  p_{21}^{-1}  q_{12} x_{3},
			& \alpha_{2} \rightharpoonup_{\sigma} x_3 &=  q_{22} x_{3}.
		\end{aligned}
	\end{align}
	Indeed, observe that 
	\begin{align*}
		\Delta^{2}(x_i) & = x_i \otimes 1 \otimes 1 +  \alpha_1 \otimes x_i \otimes 1 + \alpha_1  \otimes\alpha_1 \otimes x_i ,
		& i \in \I_2;\\
		\Delta^{2}(x_3) & = x_3 \otimes 1 \otimes 1 +  \alpha_2 \otimes x_3 \otimes 1 + \alpha_2  \otimes\alpha_2 \otimes x_3. && 
	\end{align*}
	Let $j \in  \I_2$ and $g \in \zdos$. Since $\pi(x_j) = 0$, we have by \eqref{eq:twist-mult}  that
	\begin{align*}
		g\cdot_{\sigma^{\pi}} x_j &= \sigma(g, \alpha_1) gx_j, &
		x_j\cdot_{\sigma^{\pi}} g &= \sigma(\alpha_1, g) x_jg. 
	\end{align*}
	Given $i\in \I_2$ we compute
	\begin{align*}
		\alpha_i\cdot_{\sigma^{\pi}} x_1 &= \sigma(\alpha_i, \alpha_1) \alpha_i x_1= 
		p_{i1} q_{i1}  x_1 \alpha_i = p_{i1} q_{i1} p_{1i}^{-1}   x_1 \cdot_{\sigma^{\pi}} \alpha_i .
	\end{align*}
	Hence
	$\alpha_i \rightharpoonup_{\sigma}  x_1 = p_{i1} q_{i1} p_{1i}^{-1}   x_1$.
	Similarly, $\alpha_i \rightharpoonup_{\sigma}  x_3 = p_{i2} q_{i2} p_{2i}^{-1}   x_3$.
	For the action on $x_2$ we set $a_1 =1$, $a_2 = a$. Then 
	\begin{align*}
		\alpha_i\cdot_{\sigma^{\pi}} x_2 &= \sigma(\alpha_i, \alpha_1) \alpha_i x_2= 
		p_{i1}  q_{i1} (x_2 + a_i x_1) \alpha_i  = 
		p_{i1} p_{1i}^{-1} q_{i1} (x_2 + a_i x_1) \cdot_{\sigma^{\pi}} \alpha_i 
	\end{align*}
	and the verification of \eqref{eq:YD-structure-twisted} is complete.
	Therefore the braiding $c^{\sigma}$ of $\Vs_{\sigma} = \toba(\Vs)_{\sigma}(1)$ 
	is determined by $(c^{\sigma}(x_i \otimes x_j))_{i,j\in \I_3} =$
	\begin{align*}
		\begin{pmatrix}
			q_{11} x_1 \otimes x_1&  q_{11}(x_2 + x_1) \otimes x_1& p_{12}p_{21}^{-1} q_{12}  x_3  \otimes x_1
			\\
			q_{11} x_1 \otimes x_2 & q_{11} (x_2 + x_1) \otimes x_2& p_{12}p_{21}^{-1} q_{12}  x_3  \otimes x_2
			\\
			p_{21}p_{12}^{-1} q_{21} x_1 \otimes x_3 & p_{21}p_{12}^{-1} q_{21} (x_2 +a x_1) \otimes x_3&  q_{22} x_3  \otimes x_3
		\end{pmatrix}.
	\end{align*} 
	If we choose $p_{11} = p_{21} = p_{22} = 1$ and $p_{12} = q'_{12} q_{12}^{-1}$, then clearly
	$\Vs_{\sigma} \simeq \Vs'$ as braided vector spaces. Now
	$\toba(\Vs)_{\sigma} \simeq \toba(\Vs_{\sigma})$ by Lemma \ref{lema:twisting-nichols}.
	\epf
	
	If  $q_{11} = q_{22} = 1$, $q = q_{12} = q_{21}^{-1}$ and $\ghost = -2a$, then $\Vs =: \lstr_q( 1, \ghost)$.
	
	\begin{prop}\label{prop:twisting}  Let $q, q' \in \kut$.
		Then $\toba(\lstr_q( 1, \ghost))$ and $\toba(\lstr_{q'}( 1, \ghost))$ are twist-equivalent.
		\qed
	\end{prop}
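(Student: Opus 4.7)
The plan is to observe that Proposition \ref{prop:twisting} is an immediate specialization of Lemma \ref{lem:twisting}, so the proof reduces to verifying the twist-equivalence hypothesis on the diagonal parameters.

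First, I would unpack the notation: in the conventions of \eqref{eq:braiding-block-point} and \eqref{eq:YD-structure}, the braided vector space $\lstr_q(1,\ghost)$ corresponds to the choice of parameters $q_{11}=q_{22}=1$, $q_{12}=q$, $q_{21}=q^{-1}$, and $a=-\ghost/2$, as indicated in the sentence preceding the proposition. Similarly $\lstr_{q'}(1,\ghost)$ corresponds to $q'_{11}=q'_{22}=1$, $q'_{12}=q'$, $q'_{21}=(q')^{-1}$, with the same value of $a$.

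Next, I would check the three twist-equivalence conditions between the matrices $(q_{ij})$ and $(q'_{ij})$: the diagonal entries $q_{11}=q'_{11}=1$ and $q_{22}=q'_{22}=1$ agree trivially, while the off-diagonal product satisfies $q_{12}q_{21}=q\cdot q^{-1}=1=q'\cdot(q')^{-1}=q'_{12}q'_{21}$. Thus $(q_{ij})$ and $(q'_{ij})$ are twist-equivalent, and the value of $a$ is shared.

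Finally, I would invoke Lemma \ref{lem:twisting} directly: it gives that $\toba(\Vs)$ and $\toba(\Vs')$ are twist-equivalent for the corresponding braided vector spaces $\Vs=\lstr_q(1,\ghost)$ and $\Vs'=\lstr_{q'}(1,\ghost)$, which is precisely the desired conclusion. There is no substantive obstacle here — the real work was carried out in Lemma \ref{lem:twisting}, where the 2-cocycle $\sigma$ on $\Z^2$ with $\sigma(\alpha_i,\alpha_j)=p_{ij}$ (choosing $p_{ij}$ so that $p_{21}p_{12}^{-1}q_{21}=q'_{21}$, e.g.\ $p_{11}=p_{22}=p_{12}=1$ and $p_{21}=q'_{21}/q_{21}=q/q'$) implements the twist. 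So this proposition is merely the application of that lemma to the Laistrygonian case.
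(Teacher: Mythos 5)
Your proof is correct and follows exactly the route the paper intends: the proposition is stated with \qed because it is an immediate specialization of Lemma \ref{lem:twisting} once one notes, as in the sentence directly preceding it, that $\lstr_q(1,\ghost)$ corresponds to $q_{11}=q_{22}=1$, $q_{12}=q=q_{21}^{-1}$, $a=-\ghost/2$, so that the twist-equivalence conditions on the $q_{ij}$'s hold trivially. Your explicit choice of cocycle ($p_{21}=q/q'$, rest $=1$) is a correct instantiation of the $p_{ij}$'s in the lemma's proof, though the paper leaves it implicit.
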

	
	We now determine the braided Hopf algebra structure of $ \quot{\ghost}{q}$. 
	
	\begin{prop}
		As braided Hopf algebra, $ \quot{\ghost}{q}$ is  twist-equivalent to  the enveloping algebra of a graded nilpotent Lie algebra.
	\end{prop}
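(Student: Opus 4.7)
The plan is to reduce to the case $q=1$ by twisting and then identify the untwisted algebra with the enveloping algebra of an explicit graded nilpotent Lie algebra. The cocycle twist $\sigma^\pi$ of Lemma \ref{lem:twisting} that produces Proposition \ref{prop:twisting} deforms only the multiplication of the bosonization $\toba(\lstr_q(1,\ghost)) \# \ku \zdos$, and modifies only the braiding entries involving $x_3 = z_0$; in particular it fixes the Jordan subalgebra generated by $x_1, x_2$ and leaves $\ku[x_1]$ stable as a normal Hopf subalgebra. So first I would show that $\sigma^\pi$ descends to a $2$-cocycle on the bosonization $\quot{\ghost}{q} \# \ku \zdos$, yielding a twist-equivalence $\quot{\ghost}{q} \sim \quot{\ghost}{1}$ of braided Hopf algebras.

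Next I would read off the defining relations of $\quot{\ghost}{1}$: specializing \eqref{eq:lstr-rels&11disc-2}, \eqref{eq:-1block+point-z2}, \eqref{eq:lstr-rels&11disc-qserre} and \eqref{eq:bracket ztzk} at $q=1$ produces the Lie bracket relations
\begin{align*}
[x_2, z_n] &= z_{n+1} \ (n \in \I_{0, \ghost-1}), & [x_2, z_\ghost] &= 0, & [z_m, z_n] &= 0.
\end{align*}
Let $\mathfrak g$ be the Lie algebra with basis $\{x_2, z_0, \ldots, z_\ghost\}$ and these brackets. Then $\mathfrak g$ is graded by $\deg x_2 = 1$ and $\deg z_n = n+1$, compatibly with \eqref{eq:grading}, and nilpotent with descending central series $\mathfrak g \supset \langle z_1, \ldots, z_\ghost \rangle \supset \cdots \supset \langle z_\ghost \rangle \supset 0$. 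The canonical surjection $U(\mathfrak g) \to \quot{\ghost}{1}$ is a linear bijection because the monomial PBW basis of $\quot{\ghost}{1}$ inherited from $B_\ghost$ matches a PBW basis of $U(\mathfrak g)$.

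Finally I would check that the braided Hopf algebra structure on $\quot{\ghost}{1}$ coincides with the classical Hopf structure on $U(\mathfrak g)$. A brief computation using \eqref{eq:YD-structure} with $q_{11} = q_{22} = q_{12} = q_{21} = 1$ shows that the $\zdos$-action on $\quot{\ghost}{1}$ is trivial, so its Yetter-Drinfeld braiding reduces to the flip. Since $x_2$ and $z_0$ are primitive in $\toba(\lstr_1(1, \ghost))$ and this descends to $\quot{\ghost}{1}$, the symmetry of the braiding together with $z_{n+1} = [x_2, z_n]$ gives by induction that every $z_n$ is primitive, completing the identification with $U(\mathfrak g)$.

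The main obstacle I anticipate is the first step: making precise that the cocycle twist of the bosonization descends to the quotient as a twist of braided Hopf algebras, equivalently checking that $\sigma^\pi$ is compatible with the projection $\varpi$ so that twisting commutes with quotienting by $\ku[x_1]$. Once this compatibility is in place, the remaining steps are essentially bookkeeping on PBW bases and primitive elements.
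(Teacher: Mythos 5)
Your approach is correct and essentially the paper's, just in reverse order: you twist first and then identify the Lie algebra, while the paper identifies the underlying braided vector space first and twists at the very end. The reordering introduces the technical point you flag about whether $\sigma^\pi$ descends from the bosonization of $\toba(\lstr_q(1,\ghost))$ to that of the quotient $\quot{\ghost}{q}$. That worry is surmountable (the cocycle lives on $\zdos$ and $\ku[x_1]$ is a $\zdos$-stable Yetter--Drinfeld subalgebra), but the paper sidesteps it entirely. Instead, it observes directly that the image $\ngo$ of $\langle x_2, z_0, \dots, z_\ghost\rangle$ in $\quot{\ghost}{q}$ is a Yetter--Drinfeld subobject on which the braiding has become \emph{diagonal} (the Jordan block collapses because $x_1\mapsto 0$), records the braiding matrix, deduces that each $z_n$ is primitive for arbitrary $q$ by the standard ``$q_{12}q_{21}=1$ implies $q$-commutators of primitives are primitive'' argument, identifies $\quot{\ghost}{1}\simeq U(\ngo)$, and only then invokes the twisting argument of Lemma~\ref{lem:twisting} applied directly to $\quot{\ghost}{q}$ as a braided Hopf algebra in $\ydzdos$. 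That direct application avoids the descent question. Your Lie algebra, its grading, and its lower central series are the same as the paper's, and the PBW comparison is the same bookkeeping; so the two proofs differ only in where the twist is placed and in whether one needs the descent compatibility.
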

	
	\pf  We realize $\toba(\lstr_q( 1, \ghost))$ in $\yd{\Z^{2}}$ by \eqref{eq:YD-structure}; 
	since $\ku x_1$ is a Yetter-Drinfeld submodule of $\toba(\lstr_q( 1, \ghost))$, 
	$ \quot{\ghost}{q}$ is an object in $\yd{\Z^{2}}$. 
	Let $\overline{x}_2$ and $ \overline{z}_{n}$ be the images of $x_2$ and $ z_n$ 
	in $ \quot{\ghost}{q}$, $n \in \I_{0,\ghost}$.
	We claim 
	the vector subspace $\ngo_{q}$ of  $ \quot{\ghost}{q}$ spanned by $\overline{x}_2$ and 
	$ \overline{z}_{n}$, $n \in \I_{0,\ghost}$,
	is  a subobject in $\yd{\Z^{2}}$. Indeed, by \eqref{eq:YD-structure} we have that
	\begin{align*}
		\alpha_{1} \rightharpoonup \overline{x}_2 &=  \overline{x}_2 ,&
		\alpha_{2} \rightharpoonup \overline{x}_2 &= q^{-1} \overline{x}_2,  & \delta (\overline{x}_2) &= \alpha_{1} \otimes \overline{x}_2.
	\end{align*}
	On the other hand, we know by \cite[Lemma 4.2.1]{aah-triang} that
	\begin{align*}
		\alpha_1 \rightharpoonup z_i &= q z_i, & \alpha_2 \rightharpoonup z_i &= q^{-i}z_i , &\delta(z_i) &= \alpha_1^{i} \alpha_2 \otimes z_i,&
		i  &\in \I_{0, \ghost}.
	\end{align*}
	Hence the analogous formulas for 
	$ \overline{z}_{n}$ hold in $ \quot{\ghost}{q}$ and $ \ngo_{q}$ is of diagonal type with braiding given by
	\begin{align*}
		&c(\overline{x}_2 \otimes \overline{x}_2)= \overline{x}_2 \otimes \overline{x}_2, & & c(\overline{x}_2 \otimes \overline{z}_i)= q \overline{z}_i \otimes \overline{x}_2,\\ & c(\overline{z}_i \otimes \overline{x}_2)=q^{-1} \overline{x}_2 \otimes \overline{z}_i, & & c(\overline{z}_i \otimes \overline{z}_j)=q^{i-j} \overline{z}_j \otimes \overline{z}_i,\,\, i,j \in \I_{0, \ghost}.
	\end{align*}
	Let $v_1, v_2$ be primitive elements of  a braided Hopf algebra whose braiding 
	satisfies $c(v_i \otimes v_j) = q_{ij} v_j \otimes v_i$,
	where $q_{ij} \in \kut$ and $q_{12}q_{21} = 1$. 
	A well-known argument shows that $v_1v_2 - q_{12}v_2v_1$ is again primitive.
	Hence  $ \overline{z}_{n}\in \quot{\ghost}{q}$ is primitive, $n \in \I_{0,\ghost}$.   
	When $q=1$, the braiding of $\ngo := \ngo_{1}$ is the usual flip so that $\ngo$ is a  
	nilpotent Lie algebra and $ \quot{\ghost}{1} \simeq U(\ngo)$. 
	
	\medbreak 
	Let $(p_{ij})_{i,j \in \I_2} = \begin{pmatrix}
		1 & q^{-1}\\ 1 & 1\end{pmatrix}$  and $\sigma: \ku\zdos \otimes \ku \zdos \to \ku$ be the invertible 2-cocycle determined by
	$\sigma(\alpha_{i}, \alpha_{j}) =   p_{ij}$ as in Lemma \ref{lem:twisting}. 
	Consider the bosonization $\cK = \quot{\ghost}{q} \# \ku \zdos$; 
	as explained above, $\cK_{\sigma^{\pi}} \simeq  \quot{\ghost}{q}_{\sigma} \# \ku \zdos$.
	Arguing as in the verification of \eqref{eq:YD-structure-twisted} we conclude that
	\begin{align*}
		\alpha_{1} \rightharpoonup_{\sigma} \overline{x}_2 &= \alpha_{2} \rightharpoonup_{\sigma} \overline{x}_2 
		= \overline{x}_2,& 
		\alpha_{1} \rightharpoonup_{\sigma} \overline{z}_0 &= \alpha_{2} \rightharpoonup_{\sigma} \overline{z} =  \overline{z}_{0}.
	\end{align*}
	Thus the action on $\quot{\ghost}{q}_{\sigma}$ is trivial. Now we appeal to \eqref{eq:twist-mult-R}:
	\begin{align*}
		\overline{x}_2 \cdot_{\sigma } \overline{z}_{n} &= q^{-1} \overline{x}_2 \overline{z}_{n}, &
		\overline{z}_{n}\cdot_{\sigma } \overline{x}_2 &= \overline{z}_{n}  \overline{x}_2, &
		\overline{z}_{n}\cdot_{\sigma } \overline{z}_m &= q^{-n}\overline{z}_{n} \overline{z}_m.
	\end{align*}
	We claim that $\overline{x}_2$ and $\widetilde {z}_{n} = q^{-n} \overline{z}_{n}$ in $\quot{\ghost}{q}_{\sigma}$
	satisfy the defining relations of $\quot{\ghost}{1}$. Indeed for $n \in \I_{0,\ghost}$ we have
	\begin{align*}
		\tag{\ref{eq:lstr-rels&11disc-2}}
		&\begin{aligned}
			\widetilde {z}_n \cdot_{\sigma }\widetilde {z}_{n+1}&= q^{-2n-1} \overline{z}_{n} \cdot_{\sigma }\overline{z}_{n+1}
			=  q^{-3n-1} \overline{z}_{n} \overline{z}_{n+1} = 
			q^{-3n-2} \, \overline{z}_{n+1} \overline{z}_{n}
			\\
			&= q^{-2n-1} \, \overline{z}_{n+1} \cdot_{\sigma } \overline{z}_{n}
			= \widetilde{z}_{n+1} \cdot_{\sigma } \widetilde{z}_n;
		\end{aligned}
		\\
		\tag{\ref{eq:-1block+point-z2}}
		& \begin{aligned}
			\overline{x}_2\cdot_{\sigma }\widetilde {z}_n &= q^{-n} \overline{x}_2\cdot_{\sigma }\overline{z}_{n}
			= q^{-n-1} \overline{x}_2 \overline{z}_{n}
			= q^{-n - 1} \left( q\overline{z}_{n}\overline{x}_2 + \overline{z}_{n+1}\right)
			\\ 
			&= q^{-n}\overline{z}_{n}\cdot_{\sigma }\overline{x}_2 + q^{-n - 1}\overline{z}_{n+1}
			= \widetilde{z}_{n}\cdot_{\sigma }\overline{x}_2 +  \widetilde{z}_{n+1};
		\end{aligned}
		\\
		\tag{\ref{eq:lstr-rels&11disc-qserre}}
		& \begin{aligned}
			\overline{x}_2 \cdot_{\sigma } \widetilde {z}_\ghost &= q^{-\ghost} \overline{x}_2\cdot_{\sigma }\overline{z}_{\ghost}
			=q^{-\ghost-1} \overline{x}_2\overline{z}_{\ghost} = q^{-\ghost} \overline{z}_{\ghost}\overline{x}_2 
			= \widetilde {z}_\ghost \cdot_{\sigma} \overline{x}_2.  
		\end{aligned}
	\end{align*}
	It follows now easily that $\quot{\ghost}{q}_{\sigma}$
	is isomorphic to $\quot{\ghost}{1}$ as braided Hopf algebras, i.~e. 
	$ \quot{\ghost}{q}$  and $ \quot{\ghost}{1}$ are  twist-equivalent.
	\epf
	
	Summarizing,  $\toba(\lstr_q( 1, \ghost))$ and $\toba(\lstr_{1}( 1, \ghost))$ are twist-equivalent and 
	there is an extension of braided Hopf algebras 
	\begin{align*}
		0\to \ku [x_1]\to \toba(\lstr_1( 1, \ghost)) \overset{\varpi}{\longrightarrow}  U(\ngo) \to 0
	\end{align*}
	where $\ngo$ is the Lie algebra with basis $\{\mathtt{x},  \mathtt{z}_n:  n \in \I_{0,\ghost}\}$
	and bracket
	\begin{align*}
		[\mathtt{x},  \mathtt{z}_n] &=  \mathtt{z}_{n+1}, \quad n \in \I_{0,\ghost -1},&
		[\mathtt{x},  \mathtt{z}_{\ghost}] &=0, & [\mathtt{z}_n, \mathtt{z}_m] &=0, \quad  n,m \in \I_{0,\ghost}.
	\end{align*}

	\subsection{Isomorphisms}\label{subsec:braid-hopf-isom}
	Let $q, q' \in \kut$ and $\ghost,\ghost' \in \N$. 
	
	\begin{enumerate}[leftmargin=*,label=\rm{(\alph*)}]
		\item Assume that $\toba(\lstr_q( 1, \ghost)) \simeq \toba(\lstr_{q'}( 1, \ghost'))$ as  braided Hopf algebras. Then 
		$\ghost=\ghost'$ and $q=q'$.  Indeed the isomorphism should preserve the space of primitive elements and the braiding by \cite{S}.

		\medbreak
		\item Assume that $\toba(\lstr_q( 1, \ghost)) \simeq \toba(\lstr_{q'}( 1, \ghost'))$ as algebras.
		Then $\ghost=\ghost'$, since $\ghost+3 = \GK \toba(\lstr_q( 1, \ghost))= \GK \toba(\lstr_{q'}( 1, \ghost'))= \ghost'+3$.
		Furthermore if $1 < \ord q = N < \infty$, then $\ord q' = N < \infty$ 
		since  $\toba(\lstr_q( 1, \ghost)) $ has a simple module of dimension $N$.

		\medbreak
		\item However we do not know whether $\toba(\lstr_q( 1, \ghost)) \simeq \toba(\lstr_{q'}( 1, \ghost))$ as algebras
		implies $q=q'$. In particular, it is natural to guess that $\toba(\lstr_{q}( 1, \ghost))$ is isomorphic to $\toba(\lstr_{1}( 1, \ghost))$ as algebras only when $q=1$. We show that this is indeed the case by an argument based on the determination of the 
		finite-dimensional simple modules.
		
	\end{enumerate} 
	
	\medbreak
	Let $R$ be a ring. For brevity we say ideal for two-sided ideal. 
	The set of isomorphism classes of simple $R$-modules is  denoted $\IRR R$. 
	For each $p \in \IRR R$ we fix a representative $N_p$.
	By definition, see \cite{GL}, the closed sets of the Zariski topology on  $\IRR R$
	are the sets 
	\begin{align*}
		\cV(I)  &= \{p\in \IRR R: I\cdot N_p = 0\}, & &\text{$I$ ideal of $R$}.
	\end{align*}

	When $R$ is commutative, $\IRR R = \Irr R$ with this topology
	is naturally equivalent to the maximal spectrum of $R$ with the classical Zariski topology.
	In general $\Irr R$ is a topological space with the induced topology.
	
	\medbreak
	Let $\varphi: R \to S$ be a ring homomorphism and let  $\varphi^t: \IRR S\to \IRR R$ denote the natural map given by 
	induction along $\varphi$. 
	
	\begin{lemma}\label{lema:zariski}
		If $\varphi$ is surjective, then $\varphi^t$ is a closed continuous map.
	\end{lemma}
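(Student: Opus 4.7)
The plan is to verify continuity and closedness of $\varphi^t$ directly from the definition of $\cV(I)$, leveraging two elementary facts that both depend on surjectivity of $\varphi$. First, because $\varphi$ is surjective, every simple $S$-module restricts to a simple $R$-module (submodules coincide), and $\varphi^t$ identifies $\IRR S$ bijectively with the subset $\cV(\ker\varphi)\subseteq \IRR R$ consisting of simple $R$-modules annihilated by $\ker\varphi$. Second, for every $q\in \IRR S$ one has the annihilator identity
\begin{align*}
\operatorname{Ann}_R(N_{\varphi^t(q)}) \;=\; \varphi^{-1}(\operatorname{Ann}_S(N_q)),
\end{align*}
which is immediate from the relation $r\cdot n=\varphi(r)\cdot n$ for $r\in R$, $n\in N_q$.

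For continuity, given an ideal $I$ of $R$, I would show that $(\varphi^t)^{-1}(\cV(I))=\cV(\varphi(I))$. The key point is that $\varphi(I)$ is an ideal of $S$ precisely because $\varphi$ is surjective. Both inclusions follow at once from the chain $I\cdot N_{\varphi^t(q)}=0 \Leftrightarrow \varphi(I)\cdot N_q=0$, obtained by writing the $R$-action on $N_q$ as $\varphi$ followed by the $S$-action.

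For closedness, given an ideal $J$ of $S$, I would show that $\varphi^t(\cV(J))=\cV(\varphi^{-1}(J))$. The inclusion $\subseteq$ follows directly from the annihilator identity. Conversely, any $p\in\cV(\varphi^{-1}(J))$ satisfies $\ker\varphi\subseteq \varphi^{-1}(J)\subseteq \operatorname{Ann}_R(N_p)$, so $p$ lies in the image $\cV(\ker\varphi)$ of $\varphi^t$, say $p=\varphi^t(q)$; then $\varphi^{-1}(J)\subseteq \varphi^{-1}(\operatorname{Ann}_S(N_q))$, and applying $\varphi$ together with surjectivity yields $J\subseteq \operatorname{Ann}_S(N_q)$, i.e.\ $q\in\cV(J)$. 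Since by definition every closed set in $\IRR S$ is of the form $\cV(J)$, this suffices.

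The argument is entirely formal and I do not anticipate a genuine obstacle; the only subtlety is to invoke surjectivity at precisely the right moments — once to ensure $\varphi(I)\subseteq S$ is itself an ideal, and once to recover $J\subseteq \operatorname{Ann}_S(N_q)$ from the corresponding inclusion of preimages — which is exactly the hypothesis of the lemma.
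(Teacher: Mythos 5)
Your proof is correct and follows essentially the same route as the paper: establishing the two identities $(\varphi^t)^{-1}(\cV(I))=\cV(\varphi(I))$ and $\varphi^t(\cV(J))=\cV(\varphi^{-1}(J))$, invoking surjectivity to guarantee that $\varphi(I)$ is an ideal and to deduce membership in the image of $\varphi^t$ from $\ker\varphi$ acting by zero. The annihilator reformulation you use is just a repackaging of the paper's pointwise computation $J\cdot N_q=\varphi^{-1}(J)\cdot\varphi^t(N_q)$.
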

	
	\pf It suffices to show that for any ideals $I$ of $R$ and $J$ of $S$ we have that
	\begin{align*}
		(\varphi^t)^{-1}\big(\cV(I)\big)  &= \cV(\varphi(I)), & \varphi^t(\cV (J))& = \cV \big(\varphi^{-1}(J)\big).
	\end{align*}
	Here  $\varphi(I)$ is an ideal because $\varphi$ is surjective.
	Since $I\cdot \varphi^t(N_p) = \varphi(I)\cdot N_p$, we have 
	\begin{align*}
		(\varphi^t)^{-1}\big(\cV(I)\big)  =  \{p\in \IRR S: I\cdot \varphi^t(N_p) = 0\} =\cV(\varphi(I)).
	\end{align*}
	Given $p\in \IRR S$ we have $J\cdot N_p = \varphi^{-1}(J)  \cdot\varphi^t(N_p)$ as $\varphi$ is surjective; 
	thus $\varphi^t(\cV (J)) \subset \cV \big(\varphi^{-1}(J)\big)$.
	Also if $q\in \cV \big(\varphi^{-1}(J)\big)$, then $\ker \varphi \cdot N_q = 0$ i.~e. $N_q \in \text{Im\, }\varphi^t$ and 
	the other contention holds. 
	\epf
	
	\begin{prop}
		If $\toba(\lstr_{q}( 1, \ghost)) \simeq \toba(\lstr_{1}( 1, \ghost))$ as algebras, then $q=1$.
	\end{prop}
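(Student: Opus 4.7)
The plan is to leverage the classification of finite-dimensional simples (Theorem \ref{thm:irredu-lest}) combined with the Zariski topology machinery of Lemma \ref{lema:zariski}, transporting an algebra isomorphism to a homeomorphism of spaces of simples and then finding a topological obstruction when $q\neq 1$. As a first reduction, the case $1<\ord q<\infty$ is excluded by remark (b) of Subsection \ref{subsec:braid-hopf-isom}: such an isomorphism would force $\ord 1=\ord q$, which is absurd. So I would focus on the case $q\neq 1$, $\ord q=\infty$, aiming for a contradiction.

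Suppose there is an algebra isomorphism $\Phi: \toba(\lstr_{q}(1,\ghost)) \to \toba(\lstr_{1}(1,\ghost))$. The induced bijection $\Phi^t: \Irr\toba(\lstr_{1}(1,\ghost)) \to \Irr\toba(\lstr_{q}(1,\ghost))$ is continuous and closed by Lemma \ref{lema:zariski}, hence a homeomorphism. On the other hand, applying Lemma \ref{lema:zariski} to the surjection $\nu_{\ghost}: \toba(\lstr_{q}(1,\ghost)) \to \ku_{q}[X,Y]$ of Subsection \ref{subsec:quantum-plane}, and combining with the bijection from Theorem \ref{thm:irredu-lest}, the map $\nu_{\ghost}^t$ is also a homeomorphism; likewise for $q=1$. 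Composing all three, we obtain a homeomorphism
\[
\Irr\ku_{q}[X,Y] \simeq \Irr\ku[X,Y] \simeq \ku^{2},
\]
the last identification (with the classical Zariski topology on $\ku^{2}$) being Hilbert's Nullstellensatz.

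The final step is a topological comparison between the two ends of this chain. The affine plane $\ku^{2}$ is irreducible, since $\ku[X,Y]$ is a domain; but $\Irr\ku_{q}[X,Y]$ is reducible when $q\neq 1$ and $\ord q=\infty$. Indeed, by Proposition \ref{prop:simple_quantum-plane}(a), every finite-dimensional simple $\ku_{q}[X,Y]$-module is annihilated by $X$ or by $Y$, so $\Irr\ku_{q}[X,Y] = \cV((X)) \cup \cV((Y))$, with each piece proper (the first misses $\ku^{X}_{a}$ for $a\neq 0$, and symmetrically for the second). This contradicts the homeomorphism just obtained, concluding the argument. The main obstacle in this plan is the promotion of the set-theoretic bijection of Theorem \ref{thm:irredu-lest} to a genuine homeomorphism for the Zariski topology; the key point is that the surjectivity hypothesis of Lemma \ref{lema:zariski} is built into the construction of $\nu_{\ghost}$, after which the topological distinction between $\ku^{2}$ and a union of two coordinate axes is immediate.
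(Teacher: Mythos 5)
Your proposal is correct and follows essentially the same route as the paper: excluding the case $1<\ord q<\infty$ via item (b) of \S\ref{subsec:braid-hopf-isom}, then for $q$ not a root of unity transporting a hypothetical algebra isomorphism through Lemma \ref{lema:zariski} and Theorem \ref{thm:irredu-lest} to a homeomorphism $\Irr\ku_q[X,Y]\simeq\ku^2$, which fails because the left-hand side is a union of two coordinate axes. Your choice of irreducibility as the distinguishing topological invariant makes explicit what the paper leaves implicit in the final line, but the argument is otherwise the same.
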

	
	\pf If $q = 1$, then $\Irr \toba(\lstr_{1}( 1, \ghost))$ is homeomorphic  to the plane with the Zariski topology, by Theorem \ref{thm:irredu-lest} and Lemma \ref{lema:zariski}. Let $q\neq 1$; we may assume that $q$ is not a root of 1. 
	By Theorem \ref{thm:irredu-lest} and Lemma \ref{lema:zariski}, $\Irr \toba(\lstr_{q}( 1, \ghost))$ is homeomorphic  to $\Irr \ku_q[X,Y] = U_1 \cup U_2$  where $U_1$ and $U_2$ are homeomorphic  to $\ku \times 0$ and $0  \times  \ku$ respectively; just apply 
	Lemma \ref{lema:zariski} to the projections $\ku_q[X,Y] \to \ku [X]$ and  $\ku_q[X,Y] \to \ku [Y]$ and Proposition \ref{prop:simple_quantum-plane}. Thus 
	$\Irr \toba(\lstr_{1}( 1, \ghost))$ is not homeomorphic to $\Irr \toba(\lstr_{q}( 1, \ghost))$.
	\epf
	
	\section{Point modules over \texorpdfstring{$\toba(\lstr_q( 1, \ghost))$}{}}\label{section:point-modules}

	\subsection{Point modules}
	
	Let $A=\oplus_{n\in \N_0}A^n$, $A^0 \simeq\ku$, be a graded $\ku$-algebra with $\dim_{\ku} A^n$ finite, $n\in \N$, generated in degree $1$. A {\it point module over $A$} is a (left) graded module $V=\oplus_{n\in \N_0}V^n$ over $A$ such that $V$ is cyclic, generated in degree $0$, and has Hilbert series $h_V(t)=1/(1-t)$, in other words $\dim _{\ku}V_n=1$, $n\in  \N_0 $.
	Point modules, introduced in \cite{ATV}, allow the introduce projective geometry in graded ring theory. See the survey \cite{Ro}.
	If $A$ is strongly noetherian, then the point modules for $A$ are parametrized 
	by a  projective scheme \cite[Corollary E4.12]{AZ},  \cite[Theorem 3.10]{Ro}. 
	Our goal in this Section is to compute the projective scheme parametrizing the point modules over $\toba(\lstr_q( 1, \ghost))$ 
	which we have shown in Section \ref{section:preliminaries} that is strongly noetherian.
	We do this by essentially elementary calculations.
	
	\medbreak
	We first recall the parametrization of point modules over a free associative algebra given in  \cite[Proposition 3.5 ]{Ro}. 
	As usual $(a_0 : a_1: \cdots : a_n)$ with $a_i \in \ku$ denotes  a point of the projective space $\mathbb{P}^n = \mathbb{P}^n(\ku)$.

	\begin{theorem}\label{point-freealgebra}
		Let $A=\ku\langle x_i: i \in \I_{0,n}\rangle$ be the free associative algebra. 
		The isomorphism classes of point modules over $A$  are in bijective correspondence with $\N_0$-indexed sequences of points in $\mathbb{P}^n$, in other words, points in the infinite product $ \prod_{i=0}^{\infty} \mathbb{P}^n$. The correspondence is given by: 
		\begin{align*}
			V=\oplus_{i\in \N_0}\langle v_i\rangle  &\mapsto (P_0,P_1,\ldots)\in \prod_{i=0}^{\infty} \mathbb{P}^n, &
			P_i &:= (a_{0,i}: \cdots :a_{n,i}),
		\end{align*}
		where $x_jv_i  = a_{j,i} \,v_{i+1}$.
	\end{theorem}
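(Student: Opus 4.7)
\medbreak
\noindent\textbf{Proof plan for Theorem \ref{point-freealgebra}.}

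The plan is to construct the bijection by reading off the scalars with which each generator acts on a chosen basis, and to check that the projective ambiguity exactly matches the freedom in the choice of basis. I first pass from a point module to a sequence, then conversely build a point module from any sequence, and finally check mutual inverseness.

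Given a point module $V=\oplus_{i\in\N_{0}}V^{i}$, I would start by choosing, for each $i\in\N_{0}$, a nonzero $v_{i}\in V^{i}$; since $\dim_{\ku}V^{i}=1$ this is a basis. Because $A$ is concentrated in nonnegative degrees and generated in degree $1$, $V^{i+1}=A^{1}\cdot V^{i}$, so there are uniquely determined scalars $a_{j,i}\in \ku$ with $x_{j}v_{i}=a_{j,i}v_{i+1}$. The first key point is that for every $i$ the tuple $(a_{0,i},\ldots,a_{n,i})$ is nonzero: otherwise $V^{i+1}=0$, contradicting $\dim V^{i+1}=1$. Thus $P_{i}:=(a_{0,i}\!:\!\cdots\!:\!a_{n,i})$ is a well-defined point of $\mathbb{P}^{n}$. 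The second key point is that any rescaling $v_{i}\mapsto \lambda_{i}v_{i}$ (with $\lambda_{i}\in\kut$) replaces $a_{j,i}$ by $(\lambda_{i+1}/\lambda_{i})^{-1}\lambda_{i}^{-1}\cdots$ — more cleanly: $x_{j}(\lambda_{i}v_{i})=\lambda_{i}a_{j,i}v_{i+1}=(\lambda_{i}/\lambda_{i+1})a_{j,i}\cdot\lambda_{i+1}v_{i+1}$, so the entire row $(a_{0,i},\ldots,a_{n,i})$ is scaled by the same factor $\lambda_{i}/\lambda_{i+1}$ and $P_{i}$ is unchanged. The same computation shows that any graded isomorphism $V\to V'$ is determined by a family $(\lambda_{i})$, so isomorphic point modules produce identical sequences $(P_{i})_{i\in\N_{0}}$.

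Conversely, given a sequence of points, I would fix representatives $(a_{0,i},\ldots,a_{n,i})$ and set $V=\oplus_{i\in\N_{0}}\ku v_{i}$ with $x_{j}v_{i}:=a_{j,i}v_{i+1}$. Because $A$ is the \emph{free} algebra, there are no relations to verify, so these assignments extend uniquely to a graded $A$-module structure. Cyclicity follows by induction: if $v_{i}\in Av_{0}$ and $a_{j,i}\neq 0$ for some $j$, then $v_{i+1}=a_{j,i}^{-1}x_{j}v_{i}\in Av_{0}$. Thus $V$ is a graded cyclic module with Hilbert series $1/(1-t)$, hence a point module. Rescaling the representatives of each $P_{i}$ produces an isomorphic module, by the computation of the previous paragraph, so the class of $V$ depends only on the sequence $(P_{i})$.

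Finally I would verify that the two assignments are mutually inverse: starting from a sequence, applying the first construction back recovers the same tuples in projective coordinates; starting from $V$ and rebuilding gives an isomorphic module via $v_{i}\mapsto v_{i}$. The only subtlety is bookkeeping — making sure that the projective ambiguity at level $i$ really is absorbed in the gauge $\lambda_{i}/\lambda_{i+1}$, rather than coupling inconsistently across different $i$. This is where I expect the main care to be needed; the fact that the free algebra imposes no compatibility between different indices $i$ is what makes the product $\prod_{i=0}^{\infty}\mathbb{P}^{n}$ appear with no further constraints.
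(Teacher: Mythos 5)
Your proof is correct. Note, however, that the paper does not actually prove Theorem \ref{point-freealgebra}: it is recalled verbatim from \cite[Proposition 3.5]{Ro} and used as a black box. Your argument is precisely the standard one that Rogalski gives there: pick a homogeneous basis, read off the row of scalars $(a_{0,i},\ldots,a_{n,i})$, observe it is nonzero because $V^{i+1}=A^1 V^i$ (which uses that $V$ is cyclic, generated in degree $0$, and that $A$ is generated in degree $1$), check that the only ambiguity is the simultaneous rescaling of each row by $\lambda_i/\lambda_{i+1}$ (coming from rechoosing the basis, or equivalently from a graded isomorphism, which on a point module is necessarily diagonal), and observe that freeness of $A$ means there are no relations to verify in the converse direction, so every sequence in $\prod_{i\ge 0}\mathbb{P}^n$ arises. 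The one spot where a careless reader might stumble, and which you flag correctly, is that the gauge factors $\lambda_i/\lambda_{i+1}$ decouple across $i$; this is exactly what makes the parameter space an unconstrained infinite product of projective spaces rather than an inverse limit of truncated varieties, which is what one gets for non-free algebras. No gap.
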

	
	Given an homogeneous element  $F$ of the polynomial ring $\ku[X_0, X_1, X_2]$, $\cV(F)$ denotes the 
	projective subvariety of $\bP^2$ of zeros of $F$.
	
	\begin{theorem} \label{thm:point-modules-les}
		The isomorphism classes of point modules over $\toba(\lstr_q( 1, \ghost))$ are parametrized by $\cV(X_0X_2)$. 
	\end{theorem}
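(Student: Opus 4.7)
The plan is to apply Theorem \ref{point-freealgebra} to the three degree-$1$ generators $x_1, x_2, z_0$ of $\toba(\lstr_q(1,\ghost))$. A point module $V = \bigoplus_{n \geq 0} \ku v_n$ yields scalars via $x_1 v_n = \alpha_n v_{n+1}$, $x_2 v_n = \beta_n v_{n+1}$, $z_0 v_n = \gamma_n v_{n+1}$, and the sequence $P_n := (\alpha_n : \beta_n : \gamma_n) \in \bP^2$ must satisfy the equations imposed by the defining relations. The goal is to show $P_0 \in \cV(X_0 X_2)$, that $P_0$ determines $V$ up to isomorphism, and that every such $P_0$ is realized. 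Substituting into \eqref{eq:rels B(V(1,2))} and \eqref{eq:lstr-rels&11disc-1} yields the basic constraints
\begin{align*}
\alpha_n \beta_{n+1} - \beta_n \alpha_{n+1} + \tfrac{1}{2}\alpha_n \alpha_{n+1} = 0, \qquad \gamma_n \alpha_{n+1} = q \alpha_n \gamma_{n+1}.
\end{align*}

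The heart of the argument is to rule out $\alpha_0 \gamma_0 \neq 0$. Assuming both are nonzero, induction using $\dim V_{n+1} = 1$ together with the above constraints gives $\alpha_n, \gamma_n \neq 0$ for all $n$, and the ratios satisfy the Jordan-plane recursion $\beta_{n+1}/\alpha_{n+1} = \beta_n/\alpha_n - 1/2$ and $\gamma_{n+1}/\gamma_n = \alpha_{n+1}/(q\alpha_n)$. Writing $z_k v_n = \epsilon_n^{(k)} v_{n+k+1}$ and using the recursion $z_{k+1} = x_2 z_k - q z_k x_2$ from \eqref{eq:-1block+point-z2}, a direct induction starting from $\epsilon_n^{(0)} = \gamma_n$ yields
\begin{align*}
\epsilon_n^{(k)} = (-1)^k \tfrac{k!}{2^k}\, \gamma_n \alpha_{n+1} \cdots \alpha_{n+k},
\end{align*}
the constant arising from the Jordan-plane shift. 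Applying the top relation \eqref{eq:lstr-rels&11disc-qserre}, $x_2 z_\ghost = q z_\ghost x_2$, to $v_n$ then forces $\beta_{n+\ghost+1}/\alpha_{n+\ghost+1} = \beta_n/\alpha_n$, contradicting the recursion $\beta_{n+\ghost+1}/\alpha_{n+\ghost+1} = \beta_n/\alpha_n - (\ghost+1)/2$ since $\car \ku = 0$. Hence $P_0 \in \cV(X_0 X_2)$.

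For the converse I treat the two components of $\cV(X_0 X_2)$ separately. If $\gamma_0 = 0$ and $\alpha_0 \neq 0$, the second basic constraint yields $\gamma_n = 0$ for all $n$, and then the recursion for $\epsilon_n^{(k)}$ forces $z_k \equiv 0$ on $V$ for every $k$; hence $V$ is a point module of the Jordan plane $\cA$, whose point module scheme is $\bP^1 \cong \{X_2 = 0\}$ by classical analysis. If $\alpha_0 = 0$, then $\ker x_1$ is a submodule of $V$ by \eqref{eq:rels B(V(1,2))} and \eqref{eq:lstr-rels&11disc-1} (as in the proof of Lemma \ref{rem:irr-L(1,g)}), and it contains $v_0$, so cyclicity gives $x_1 \equiv 0$ and $V$ descends to a point module of $\quot{\ghost}{q}$; its point module scheme is $\bP^1 \cong \{X_0 = 0\}$, as can be shown either by an analogous direct analysis on the $2$-generator algebra $\quot{\ghost}{q}$ or by reducing to the $q = 1$ nilpotent-enveloping-algebra case through the twist-equivalence stated just after Proposition \ref{prop:twisting}. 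The union of the two projective lines is exactly $\cV(X_0 X_2)$. The main obstacle is the closed-form induction establishing $\epsilon_n^{(\ghost)}$ and, secondarily, the parametrization of $\quot{\ghost}{q}$'s point modules.
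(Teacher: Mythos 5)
Your handling of the case $\alpha_0\gamma_0\neq 0$ is correct and in fact a bit tidier than the paper's Lemma~\ref{lema:point-part1}. The closed form $\epsilon_n^{(k)}= (-1)^k\frac{k!}{2^k}\,\gamma_n\alpha_{n+1}\cdots\alpha_{n+k}$ does follow by a direct induction from the recursion $\epsilon_n^{(k+1)}=\epsilon_n^{(k)}\beta_{n+k+1}-q\beta_n\epsilon_{n+1}^{(k)}$ together with the Jordan-plane constraint $\beta_{n+1}/\alpha_{n+1}=\beta_n/\alpha_n-\tfrac12$ and $q\,\gamma_{n+1}\alpha_n=\gamma_n\alpha_{n+1}$, and it holds uniformly with no sub-case on whether $\beta_j$ vanishes; feeding it into $x_2z_{\ghost}-qz_{\ghost}x_2=0$ forces $\beta_{n+\ghost+1}/\alpha_{n+\ghost+1}=\beta_n/\alpha_n$, contradicting the shift by $(\ghost+1)/2$ in characteristic zero. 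The paper instead evaluates the degree-$(\ghost+2)$ identity~\eqref{lema-aux-formulas1} on $v_0$ and invokes the binomial identity~\eqref{id-aux}, with a separate sub-case when some $b_j=0$; the two arguments are close cousins but yours avoids the sub-case. The reduction of the $\gamma_0=0,\alpha_0\neq 0$ case to point modules of the Jordan plane $\cA$ is likewise sound.

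The genuine gap is in the case $\alpha_0=0$. You reduce correctly to point modules over $\quot{\ghost}{q}$, but then simply assert that its point scheme is $\bP^1\cong\{X_0=0\}$, offering either ``analogous direct analysis'' or a reduction to $q=1$ via twist-equivalence. The first option \emph{is} the bulk of the work: it is exactly what the paper's Lemmas~\ref{lema:point-part2} and~\ref{lema:point-part3} do, by showing (via the infinite polynomial system $(\mathscr{S}_\ghostvar)$, proved inductively on $\ghostvar$, and a separate analysis when some $b_i=0$) that a solution is uniquely determined by $c_0/b_0$, respectively collapses to $(0:0:1)$. Calling this ``secondarily'' an obstacle misjudges its weight -- it is the hardest lemma in the section, and nothing in your sketch supplies it. The second option is not justified either: the twist-equivalence of Proposition~\ref{prop:twisting} is a $2$-cocycle twist with respect to the $\Z^2$-grading coming from the Yetter--Drinfeld structure, and point modules are $\N_0$-graded but generically \emph{not} $\Z^2$-homogeneous (already for $\ku[X,Y]$, a point module with both coordinates nonzero admits no $\Z^2$-grading), so the cocycle equivalence of $\Z^2$-graded module categories does not transport point modules; you would need to show the twist is a Zhang twist in the $\N_0$-graded sense, which is an extra claim, and even then the $q=1$ computation for $U(\ngo)$ would still have to be carried out. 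In its present form the proof of the $\alpha_0=0$ half of the theorem (and with it the uniqueness of the sequence $(P_n)$ given $P_0\in\{X_0=0\}$) is missing.
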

	
	The parametrization is given by $V\mapsto P_0$ in the notation of Theorem \ref{point-freealgebra}.
	To prove Theorem \ref{thm:point-modules-les}, observe that  $\cV(X_0X_2) = B \cup C  \cup \{ (0:0:1)\}$ where
	\begin{align*}
		B&:=\{(1:b:0):\,b\in \ku \},&  C&:=\{(0:1:c):\, c\in \ku \}.
	\end{align*}
	
	We deal with the point modules parametrized by $B$ and $C$ in Lemmas \ref{lema:point-part1} and  \ref{lema:point-part2} while
	we show in Lemma \ref{lema:point-part3}  that the  rest corresponds to $(0:0:1)$.  
	
	\smallbreak
	Recall from Lemma \ref{lem:epi-algebra} and Subsection \ref{subsec:quantum-plane} the algebra surjections
	\begin{align*}
		\xymatrix{\toba(\lstr_q( 1, \ghost))  \ar@/_0.7pc/_{\nu_{\ghost}}[drrr] \ar@{->}^(.4){\pi_{\ghost}}[r]
			& \toba(\lstr_q( 1, \ghost-1)) \ar@{->}_{\nu_{\ghost - 1}}[drr]  \cdots 
			& \toba(\lstr_q( 1, 2)) \ar@{->}^{\pi_{2}}[r] \ar@{->}_(.3){\nu_{2}}[dr]& \toba(\lstr_q( 1, 1))\ar@{->}^{\nu_{1}}[d] 
			\\
			&&& \ku_q[X,Y].} 
	\end{align*}
	The associated maps $\pi_{\ghost}^t$, $\pi_{\ghost - 1}^t$, \dots between the varieties of point modules are all isomorphisms, 
	while $\nu_{\ghost}^t$, $\nu_{\ghost - 1}^t$, \dots identify the variety corresponding to the quantum plane, which is $\mathbb P^1$
	by \cite[Example 3.2]{Ro}, with $C  \cup \{ (0:0:1)\}$.

	\subsection{Proof of Theorem \ref{thm:point-modules-les}}
	In the rest of the section $V= \oplus_{i\in \N_0} V_i$ denotes a point module over $\toba(\lstr_q( 1, \ghost))$ 
	with $V_i=\langle v_i\rangle$, $i\in \N_0$. Since  $x_1, x_2, z_0$ have degree one  and $V$ is cyclic,
	there exists $P_i=(a_i: b_i: c_i) \in \mathbb{P}^2$ such that 
	\begin{align}\label{scalars}
		x_1 v_i &= a_iv_{i+1},&  x_2 v_i &= b_iv_{i+1},&  z_0v_i &= c_iv_{i+1}, & i\in \N_0.
	\end{align}
	By Theorem \ref{point-freealgebra}, $V$ is completely determined by $P:=(P_0,P_1,\ldots)\in \prod_{i=0}^{\infty} \mathbb{P}^2$.  
	We start by the following identity in $\toba(\lstr_q( 1, \ghost))$: 
	\begin{equation} \label{lema-aux-formulas1}
		\sum\limits_{i\in \I_{0,\ghost+1}} \textstyle\binom{\ghost+1}{i}(-q)^{i} \,x_2^{\ghost+1-i}z_0x_2^i=0.
	\end{equation}
	
	\pf 
	One proves recursively on $n \in \I_{ \ghost+1}$ that 
	$z_n=\sum\limits_{i \in \I_{0, n}} \textstyle\binom{n}{i}(-q)^{i}\,x_2^{n-i}z_0x_2^i$. The claim follows because of the defining relation \eqref{eq:lstr-rels&11disc-qserre}. \epf
	
	\begin{remark}\label{rem:ai=0} The following are equivalent: 
		\begin{enumerate*}[label=\rm{(\roman*)}]
			\item $a_0 = 0$, \item $a_{i} = 0 $ for some  $i \in \N_0$,  \item $a_{i} = 0 $ for all $i \in \N_0$.
		\end{enumerate*}
	\end{remark}
	
	\pf
	The relations  \eqref{eq:rels B(V(1,2))} and \eqref{eq:lstr-rels&11disc-1} imply that 
	\begin{align}\label{eq:rels-pointmod}
		a_ib_{i+1} -a_{i+1} b_i+\frac{1}{2}a_ia_{i+1} &= 0,& a_{i+1}c_i-qa_ic_{i+1}&=0,& i &\in\N_0.
	\end{align} 
	Assume that $a_i \neq 0$. We claim that $a_{i+1} \neq 0$. 
	Indeed, if $a_{i+1} = 0$, then  \eqref{eq:rels-pointmod} implies that  
	$b_{i+1} = c_{i+1} = 0$, that is $V$ is not cyclic, a contradiction. Similarly if $a_{i+1} \neq 0$ and $a_{i} = 0 $, 
	then $b_{i} = c_{i} = 0$, again a contradiction.
	Hence $a_i=0$ if and only if $a_{i+1}=0$ and the Remark follows.
	\epf

	\begin{lemma} \label{lema:point-part1}
		If $a_0\neq 0$, then $P_i=(1:b_0-i/2:0)$ for all  $i \in \N_0$.
	\end{lemma}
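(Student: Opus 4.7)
The plan is to normalize and then read off all coefficients from the defining relations. Since $a_0 \neq 0$, Remark \ref{rem:ai=0} forces $a_i \neq 0$ for every $i \in \N_0$; rescaling each $v_i$ by the product $a_0a_1\cdots a_{i-1}$ lets me assume $a_i = 1$ for all $i$. The two identities in \eqref{eq:rels-pointmod} then collapse to $b_{i+1} = b_i - \tfrac{1}{2}$ and $c_{i+1} = q^{-1}c_i$, giving $b_i = b_0 - i/2$ and $c_i = c_0/q^i$. The entire lemma thus reduces to proving $c_0 = 0$.

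To achieve this, I would introduce the auxiliary scalars $c_{n,i} \in \ku$ defined by $z_n v_i = c_{n,i} v_{i+n+1}$, which are well defined because the grading \eqref{eq:grading} forces $z_n v_i \in V_{i+n+1} = \langle v_{i+n+1}\rangle$; note $c_{0,i} = c_i$. Applying \eqref{eq:-1block+point-z2} to $v_i$ yields the recursion
\[
c_{n+1,i} = c_{n,i}\,b_{i+n+1} - q\,b_i\,c_{n,i+1}, \qquad n \in \I_{0,\ghost-1}.
\]
The key observation is that $b_{i+n+1} - b_i = -(n+1)/2$ is independent of $i$, so a routine induction on $n$ telescopes the recursion to the closed form
\[
c_{n,i} = \Bigl(-\tfrac{1}{2}\Bigr)^n n!\,\frac{c_0}{q^i}, \qquad n \in \I_{0,\ghost}.
\]

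To conclude, I would apply \eqref{eq:lstr-rels&11disc-qserre} to $v_i$: this reads $c_{\ghost,i}\,b_{i+\ghost+1} - q\,b_i\,c_{\ghost,i+1} = 0$, which is precisely the statement $c_{\ghost+1,i} = 0$ in the above recursion. Extending the closed formula to $n = \ghost+1$ then forces $(-\tfrac{1}{2})^{\ghost+1}(\ghost+1)!\,c_0/q^i = 0$, so $c_0 = 0$; hence $c_i = 0$ for all $i$ and $P_i = (1 : b_0 - i/2 : 0)$, as claimed. I do not anticipate a serious obstacle; the mildly subtle point is recognizing that \eqref{eq:lstr-rels&11disc-qserre} is nothing but the ``next'' instance of the recursion driven by \eqref{eq:-1block+point-z2}, and that this is what forces $c_0 = 0$ through the factorial factor in the closed formula.
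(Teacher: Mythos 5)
Your argument is correct and reaches the same conclusion by a slightly different, arguably cleaner route. Both proofs boil down to evaluating the vanishing of $z_{\ghost+1}=x_2z_\ghost-qz_\ghost x_2$ on $v_0$, together with the normalization $a_i=1$, $b_i=b_0-i/2$, $c_i=q^{-i}c_0$. The paper first completely unrolls $z_{\ghost+1}$ into the binomial sum \eqref{lema-aux-formulas1} involving only $x_2$ and $z_0$, evaluates on $v_0$ to get \eqref{eq-for}, and then has to split into two cases according to whether some $b_j$ with $j\in\I_{0,\ghost+1}$ vanishes; the generic case is handled by the partial-fraction identity \eqref{id-aux}, producing a nonzero factorial coefficient in front of $c_0$. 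You instead keep the intermediate $z_n$'s, set $c_{n,i}$ by $z_n v_i = c_{n,i}v_{i+n+1}$ (well defined by the grading), and observe that the one-step recursion $c_{n+1,i}=c_{n,i}b_{i+n+1}-qb_ic_{n,i+1}$ coming from \eqref{eq:-1block+point-z2} telescopes — since $b_{i+n+1}-b_i=-(n+1)/2$ is independent of $i$ and the $q^{-i}$ factor cancels — into the closed form $c_{n,i}=(-\tfrac12)^n n!\,q^{-i}c_0$. The $q$-Serre relation \eqref{eq:lstr-rels&11disc-qserre} is then literally the next instance $c_{\ghost+1,i}=0$ of this recursion, and the factorial coefficient $(-\tfrac12)^{\ghost+1}(\ghost+1)!\neq 0$ forces $c_0=0$ with no case distinction and no appeal to \eqref{id-aux}. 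Your version thus trades the paper's combinatorial identity and its $b_j=0$/$b_j\neq 0$ dichotomy for a single uniform induction on $n$; as a bonus, the factorial appearing in the paper's final display is made transparent.
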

	\pf
	Given $i \in \N_0$, by Remark \ref{rem:ai=0} $a_i \neq 0$,
	hence we can assume that $a_i = 1$. By  \eqref{eq:rels-pointmod},
	$b_{i+1} = b_{i} - \frac{1}{2}$ and $c_{i+1} = q^{-1}c_{i}$. 
	Therefore
	\begin{align*}
		P_i  &= (1: b_0 - i/2 : q^{-i} c_0),&  i &\in \N_0.
	\end{align*}
	It remains to prove that $c_0=0$.  Evaluating both sides of  \eqref{lema-aux-formulas1} on $v_{0}$ and reordering, we have that
	\begin{align} \label{eq-for}
		\sum\limits_{i\in \I_{0,\ghost+1}} \textstyle\binom{\ghost+1}{i} (-q)^{i} \,b_0\cdots b_{i-1}b_{i+1}\cdots b_{\ghost+1}c_i=0.
	\end{align}
	Suppose first that $b_j=0$ for some $j \in  \I_{0, \ghost+1}$, that is $b_0 = j/2$. Then $b_i \neq 0$ for all $i \neq j$. 
	By \eqref{eq-for}, $b_0\cdots b_{j-1}b_{j+1}\cdots b_{\ghost+1}c_j=0$; thus $c_j=0$ and $c_0=0$.  
	
	\medbreak
	Hence we may assume that $b_i\neq 0$, $i\in \I_{0, \ghost+1}$. Set $b:=b_0b_1\cdots b_{\ghost+1} \neq  0$ and $\hat{b}_i:=b/b_i$.
	By \eqref{eq-for} we have that 
	\begin{align}\label{eq-for1}
		\begin{aligned}
			0 &=\sum\limits_{i \in \I_{0,\ghost+1}}\textstyle\binom{ \ghost+1}{i} (-q)^{i}\hat{b}_ic_i 
			=\sum\limits_{i \in \I_{0,\ghost+1}}(-1)^{i}\binom{ \ghost+1}{i}\hat{b}_ic_0\\
			&=bc_0\,\,\sum\limits_{i \in \I_{\ghost+1}}(-1)^{i} \textstyle\binom{\ghost+1}{i} \frac{1}{b_0-i/2}\\
			&=2bc_0\sum\limits_{i \in \I_{0,\ghost+1}}(-1)^{i}\textstyle\binom{\ghost+1}{ i} \frac{1}{2b_0-i}.
		\end{aligned}
	\end{align}

	It is easy to prove by induction on $n$ that
	\begin{align}\label{id-aux}
		\sum\limits_{i \in \I_{0,n}}(-1)^{i} \textstyle\binom{n}{i} \frac{1}{t-i} &= \frac{(-1)^n\, n!}{t(t-1)\ldots(t-n)},&
		n\in \N, \ t &\in \Bbbk \backslash \N_0.
	\end{align}
	
	Applying \eqref{id-aux} to $t=2b_0$ we obtain from \eqref{eq-for1} that
	\[\frac{2bc_0(-1)^{\ghost+1}\,(\ghost+1)!}{2b_0(2b_0-1)\cdots (2b_0-(\ghost+1))}=0.\]
	Hence  $c_0=0$,  consequently  $c_i=0$ for $i\in \N_0$ and $P_i=(1:b_0-i/2:0)$.\epf

	\begin{lemma} \label{lema:point-part2}
		Assume that $a_0=0$ and $b_j\neq 0$ for all  $j\in \N_0$. Then
		\begin{align*}
			P_j &= \Big(0:1:q^{-j}\frac{c_0}{b_0}\Big),&  j &\in \N_0.
		\end{align*}
	\end{lemma}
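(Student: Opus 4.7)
The plan is to combine Remark \ref{rem:ai=0} with the defining relations of $\quot{\ghost}{q}$, namely \eqref{eq:lstr-rels&11disc-2}, \eqref{eq:-1block+point-z2} and \eqref{eq:lstr-rels&11disc-qserre}. First, $a_0=0$ together with Remark \ref{rem:ai=0} yields $a_j=0$ for all $j\in\N_0$, so $V$ factors through $\quot{\ghost}{q}$. Since each $b_j\neq 0$, I would rescale the basis recursively by $v_{j+1}:=b_j^{-1}x_2v_j$ so that $b_j=1$ in the new basis; this amounts to writing $P_j=(0:1:c_j)$ in projective coordinates, where $c_j$ now stands for the old ratio $c_j/b_j$. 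The statement then becomes: prove $c_j=q^{-j}c_0$ for all $j$.

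The next step is to record the action of each $z_n$ in this basis. Using \eqref{eq:-1block+point-z2} in the form $z_{n+1}=x_2z_n-qz_nx_2$, induction on $n$ shows that $z_nv_i=\alpha_{n,i}v_{i+n+1}$ with
\[
\alpha_{n,i}=\sum_{j=0}^{n}\binom{n}{j}(-q)^{j}c_{i+j}.
\]
Relation \eqref{eq:lstr-rels&11disc-qserre} then forces $\alpha_{\ghost+1,i}=0$ for all $i\in\N_0$; this is a linear recurrence for $(c_i)$ whose characteristic polynomial is $(1-qr)^{\ghost+1}$, with unique root $r=q^{-1}$ of multiplicity $\ghost+1$. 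Hence $c_i=q^{-i}P(i)$ for some $P\in\ku[t]$ with $\deg P\leq\ghost$.

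The last input is \eqref{eq:lstr-rels&11disc-2} with $n=0$, namely $z_0z_1=q^{-1}z_1z_0$. Evaluated on $v_i$ this reads $\alpha_{1,i}\alpha_{0,i+2}=q^{-1}\alpha_{0,i}\alpha_{1,i+1}$; noting that $\alpha_{1,i}=-q^{-i}\Delta P(i)$ with $\Delta P(t):=P(t+1)-P(t)$, substitution yields the polynomial identity
\[
\Delta P(t)\,P(t+2)=P(t)\,\Delta P(t+1)\qquad\text{in }\ku[t].
\]
If $\deg P=d\geq 1$ with leading coefficient $a_d$, then the coefficients of $t^{2d-1}$ on both sides agree (both equal $da_d^{2}$), and a short computation produces
\[
(\mathrm{LHS}-\mathrm{RHS})\big|_{t^{2d-2}}=\bigl(2d^{2}-2\tbinom{d}{2}\bigr)a_d^{2}=d(d+1)\,a_d^{2},
\]
which is nonzero, a contradiction. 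Thus $P$ is constant, $P\equiv c_0$, and so $c_i=q^{-i}c_0$. Returning to the original un-normalized coordinates, $P_j=(0:1:q^{-j}c_0/b_0)$.

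The main hurdle is the polynomial identity in the last paragraph: the leading coefficients cancel, so the argument requires the next-order coefficient, whose precise form $d(d+1)$ is what closes the proof. A conceptually cleaner but more involved alternative is to set $G(t)=P(t+1)/P(t)$, rewrite the equation as $G(t)+1/G(t+1)=2$, and verify that only $G\equiv 1$ yields a polynomial $P$; the direct coefficient comparison is preferable because it sidesteps any worries about zeros of $P$.
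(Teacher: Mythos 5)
Your proof is correct, and it follows a genuinely different route from the paper's. The paper encodes the relations \eqref{eq:lstr-rels&11disc-2} (for every $n \in \I_{0,\ghost-1}$) together with \eqref{eq:lstr-rels&11disc-qserre} into the infinite system of quadratic equations $(\mathscr{S}_\ghost)$ in the variables $\ldv{j}{0}=c_j/b_j$, and solves that system by an induction on $\ghost$. You instead linearize first: using only the truncation relation $x_2z_\ghost - qz_\ghost x_2=0$, you observe that the sequence $(c_i)$ obeys a constant-coefficient linear recurrence with characteristic polynomial $(1-qr)^{\ghost+1}$, hence $c_i=q^{-i}P(i)$ for a polynomial $P$ of degree at most $\ghost$; then the single quadratic relation $z_0z_1-q^{-1}z_1z_0=0$ already forces $\deg P=0$. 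I checked the coefficient computation: with $\deg P=d\geq 1$ and leading coefficient $a_d$, both sides of $\Delta P(t)\,P(t+2)=P(t)\,\Delta P(t+1)$ have $t^{2d-1}$-coefficient $d\,a_d^2$, and the discrepancy at $t^{2d-2}$ is $\bigl(2d^2+\tbinom{d}{2}\bigr)a_d^2-3\tbinom{d}{2}a_d^2=d(d+1)a_d^2\neq 0$ in characteristic zero, as you assert. Your version is shorter and is sharper in one respect: it exhibits that, once the truncation is used, the commutation relations \eqref{eq:lstr-rels&11disc-2} for $n\geq 1$ are superfluous, whereas the paper's induction invokes all of them. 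The trade-off is that you import the (standard) structure theory of linear recurrences over a characteristic-zero field, while the paper's proof is purely hands-on term manipulation; both approaches are sound.
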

	\begin{proof} By Remark \ref{rem:ai=0}, $a_j = 0$, hence $P_j = (0:1:\frac{c_j}{b_j})$,  $j \in \N_0$. 
		Set 
		\begin{align}\label{eq:def-lambda-beta}
			\ld{j}{0} &:=\frac{c_j}{b_j}, &  \ld{j}{n + 1} &:=\ld{j}{n}-q\ld{j+1}{n},& \beta_{j,n}&:=b_jb_{j+1}\cdots b_{j+n},
		\end{align}
		for $j,n\in \N_0$.
		Applying repeatedly \eqref{eq:-1block+point-z2}, we have
		\begin{align}\notag
			z_1v_j  = (b_{j+1}c_j - qc_{j+1} b_j)v_{j+2} &=\beta_{j,1}(\textstyle\frac{c_j}{b_j}-q\textstyle\frac{c_{j+1}}{b_{j+1}})v_{j+2}=\beta_{j,1}\ld{j}{1}v_{j+2} ,
			\\ \label{action-zn}
			z_nv_j &= \beta_{j,n}\ld{j}{n}v_{j+n+1}, \quad j\in \N_0,\,\,\,n\in \N.
		\end{align}
		From \eqref{eq:lstr-rels&11disc-qserre} and \eqref{action-zn} follows that \[\beta_{j,\ghost+1}\left(\ld{j}{\ghost}-q\ld{j+1}{\ghost}\right)=0.\] 
		By \eqref{eq:lstr-rels&11disc-2}, $z_{n+1}z_n-qz_nz_{n+1}=0$. Thus \eqref{action-zn}  implies also that \[\beta_{j,2n+1}\left(\ld{j}{n}\ld{j+n+1}{n+1}-q\ld{j}{n+1}\ld{j+n+2}{n}\right)=0,\quad  n\in \I_{0, \ghost-1}.\]
		Since all $\beta_{j,i}$'s are $\neq 0$ we  are led  to deal with  the following  systems of polynomial equations
		on the variables $\ldv{j}{0}$, $j \in \N_0$.  Define recursively
		\begin{align}\label{eq:def-lambda-beta-var}
			\ldv{j}{n + 1} =\ldv{j}{n}-q\ldv{j+1}{n}.
		\end{align}
		
		We consider for each  $\ghostvar\in \N$ the infinite system 
		\begin{align}\label{nonl-sys}
			\tag*{($\mathscr{S}_\ghostvar$)}
			&\begin{aligned}
				\left\{ \begin{array}{ll}
					\ldv{j}{\ghostvar}-q\ldv{j+1}{\ghostvar}=0, & \\[.8em]
					\ldv{j}{n}\ldv{j+n+1}{n+1}-q\ldv{j}{n+1}\ldv{j+n+2}{n}=0, &
				\end{array} \right. 
			\end{aligned}&
			j &\in \N_0, & n &\in \I_{0, \ghostvar-1}.
		\end{align}

		\begin{claim*}  
			The  system \ref{nonl-sys}  has a unique solution $\big(\ldsol{j}{0}\big)_{j\in \N_0}$ for each  $x \in \ku$, namely
			\begin{align}\label{eq:system-solutions}
				\ldsol{j}{0} &= q^{-j}x, &  j&\in \N_0. 
			\end{align}
		\end{claim*}
		
		It is easy to see that  \eqref{eq:system-solutions} is a solution of 
		$(\mathscr{S}_{\ghostvar})$. For the converse we  proceed by induction on $\ghostvar$. 
		Let $\big(\ldsol{j}{0}\big)_{j\in \N_0}$ be a solution of  $(\mathscr{S}_{1})$.
		Then 
		\begin{align}\label{nonl-sys-ghost1}
			&\begin{aligned}
				\left\{ \begin{array}{ll}
					\ldsol{j}{0}-2q\ldsol{j+1}{0}+q^2\ldsol{j+2}{0}=0, & \\[.8em]
					\ldsol{j}{0}\ldsol{j+1}{0} -  2q\ldsol{j}{0}\ldsol{j+2}{0} + q^2\ldsol{j+1}{0}\ldsol{j+2}{0}=0.&
				\end{array} \right.
			\end{aligned} & j &\in \N_0,
		\end{align}
		by \eqref{eq:def-lambda-beta-var}. 
		The second equation of \eqref{nonl-sys-ghost1} minus the first multiplied by $\ldsol{j+1}{0}$
		gives
		$(\ldsol{j+1}{0})^2 - \ldsol{j}{0}\ldsol{j+2}{0}  = 0$;
		replacing $\ldsol{j+2}{0}$ by  $\frac{-1}{q^{2}} \left(\ldsol{j}{0}-2q\ldsol{j+1}{0}\right)$ we get
		\begin{align*}
			(\ldsol{j+1}{0})^2 + q^{-2} (\ldsol{j}{0})^2 - 2q^{-1} \ldsol{j}{0}\ldsol{j+1}{0} &=  \left(\ldsol{j+1}{0} - q^{-1} \ldsol{j}{0}\right) ^2= 0.
		\end{align*}
		That is, $\ldsol{j+1}{0} = q^{-1} \ldsol{j}{0}$ for all $j \in \N_0$; this implies \eqref{eq:system-solutions}. 
		
		\medbreak
		Assume now  that the claim holds for $\ghostvar > 0$. 
		Let $\big(\ldsol{j}{0}\big)_{j\in \N_0}$ be a solution of  $(\mathscr{S}_{\ghostvar + 1})$.
		By \eqref{eq:def-lambda-beta-var}, the first equation gives 
		\begin{align*}
			\ldsol{j+2}{\ghostvar} &= 2q^{-1}\ldsol{j+1}{\ghostvar}-q^{-2}\ldsol{j}{\ghostvar}, & j &\in \N_0.
		\end{align*}
		Then it is easy to prove recursively that 
		\begin{align}\label{for-recurs}
			\ldsol{j + h}{\ghostvar} &= h q^{1-h}\ldsol{j+1}{\ghostvar}-(h -1)q^{-h}\ldsol{j}{\ghostvar},& h &\geq 2.
		\end{align}
		When $n=\ghostvar$, the second equation of $(\mathscr{S}_{\ghostvar + 1})$  together with \eqref{eq:def-lambda-beta-var} says that
		\begin{align*}
			\ldsol{j}{\ghostvar}\ldsol{j+\ghostvar+1}{\ghostvar} - 2q\ldsol{j}{\ghostvar }\ldsol{j + \ghostvar + 2}{\ghostvar}  
			+ q^2 \ldsol{j + 1}{\ghostvar }\ldsol{j + \ghostvar + 2}{\ghostvar} &=0,  & j &\in \N_0.
		\end{align*}
		Plugging  \eqref{for-recurs} into the previous equality we see that 
		\begin{align*}
			(\ghostvar+2)\left(q^{-\ghostvar-1}\left(\ldsol{j}{\ghostvar}\right)^2-2q^{-\ghostvar}\ldsol{j}{\ghostvar}\ldsol{j+1}{\ghostvar}+q^{-\ghostvar+1}\left(\ldsol{j+1}{\ghostvar}\right)^2\right)=0.
		\end{align*}
		That is, $\left(\ldsol{j}{\ghostvar}-q\ldsol{j+1}{\ghostvar}\right)^2=0$. Hence  we have that   $\ldsol{j}{0}$, 
		$j\in \N_0$, is a solution of  $(\mathscr{S}_{\ghostvar})$.
		By the inductive hypothesis,   $\ldsol{j}{0}=q^{-j}\ldsol{0}{0}$ and  the Claim follows.
		
		\medbreak
		Since $\big(\frac{c_j}{b_j}\big)$ is a solution of $(\mathscr{S}_{\ghost})$ by the discussion above,  the Claim implies that
		$\dfrac{c_j}{b_j}=q^{-j}\dfrac{c_0}{b_0}$, $j\in \N_0$. The Lemma follows.
	\end{proof}
	
	We next proceed with the remaining posibility.
	
	\begin{lemma} \label{lema:point-part3}
		Assume that $a_0=0$ and $b_i =  0$ for some $i\in \N_0$. Then 
		\begin{align*}
			P_j &=(0:0:1), & j &\in \N_0.
		\end{align*}
	\end{lemma}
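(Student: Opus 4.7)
I would prove the lemma by contradiction, showing that under the hypotheses ($a_0=0$, $b_i=0$ for some $i$) one must have $b_j = 0$ for all $j \in \N_0$; cyclicity then forces $c_j \neq 0$ (since $(a_j,b_j,c_j)$ is a projective point), giving $P_j=(0:0:1)$. By Remark \ref{rem:ai=0}, $a_j = 0$ for all $j$. First I would derive, by induction on $n$ via the recursive relation \eqref{eq:-1block+point-z2}, the formula $z_n v_j = \gamma_{n,j}v_{j+n+1}$ with
\begin{align*}
\gamma_{n,j}= \sum_{\ell=0}^{n}\binom{n}{\ell}(-q)^{\ell}\, b_j\cdots b_{j+\ell-1}\,c_{j+\ell}\,b_{j+\ell+1}\cdots b_{j+n};
\end{align*}
when $b_j = 0$, only the $\ell = 0$ term survives, so $\gamma_{n,j}= c_j\,b_{j+1}\cdots b_{j+n}$.

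The core step is \emph{forward propagation}: $b_i = 0 \Rightarrow b_{i+1} = 0$. Supposing otherwise, $c_i \neq 0$ by cyclicity and $b_{i+1}\neq 0$ by assumption. From $\gamma_{\ghost+1,i}=0$ (relation \eqref{eq:lstr-rels&11disc-qserre}) we obtain $c_ib_{i+1}\cdots b_{i+\ghost+1}=0$; let $k \in \I_{2,\ghost+1}$ be the smallest index with $b_{i+k}=0$, so that $b_{i+1}, \ldots, b_{i+k-1}\neq 0$ and $c_{i+k}\neq 0$ by cyclicity. Applying \eqref{eq:lstr-rels&11disc-2} with $n=0$ to $v_i$ gives $q\gamma_{1,i}c_{i+2}=c_i\gamma_{1,i+1}$, which upon substituting the values of $\gamma_{1,i}$ and $\gamma_{1,i+1}$ reduces to
\begin{align*}
c_{i+1}b_{i+2} = 2q\,b_{i+1}c_{i+2}. \qquad (\star)
\end{align*}
For $k=2$, $(\star)$ forces $c_{i+2}=0$, hence $(a_{i+2},b_{i+2},c_{i+2})=(0,0,0)$, contradicting cyclicity. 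For $k \geq 3$ I would exploit \eqref{eq:bracket ztzk} with $m=0$ and $n \in \I_{1,\ghost}$ applied to $v_i$, which (using $b_i=0$) gives the rigid formula $\gamma_{n,i+1}=q^n b_{i+1}\cdots b_{i+n}c_{i+n+1}$; equating this against the direct sum formula for $\gamma_{n,i+1}$ and invoking $(\star)$ inductively, one derives, for each $n$, a polynomial identity that—combined with $b_{i+k}=0$ and $c_{i+k}\neq 0$—propagates the vanishing of $b$ and $c$ around the index $i+k$ until some degree $j$ yields $(a_j,b_j,c_j)=(0,0,0)$, contradicting cyclicity.

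Once forward propagation is established, \emph{backward propagation} ($b_i=0 \Rightarrow b_{i-1}=0$ for $i \geq 1$) is much easier: applying \eqref{eq:lstr-rels&11disc-2} with $n=0$ to $v_{i-1}$ and using $b_i=0$ produces the identity $c_{i-1}b_{i+1}+q^2 b_{i-1}c_{i+1}=0$; forward propagation yields $b_{i+1}=0$ and cyclicity gives $c_{i+1}\neq 0$, whence $b_{i-1}=0$. Iterating the two propagations starting from $i$, we conclude $b_j=0$ for all $j \in \N_0$, as desired.

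The main obstacle is the combinatorial bookkeeping in the $k \geq 3$ case of forward propagation: the iterated polynomial identities from \eqref{eq:bracket ztzk}, \eqref{eq:lstr-rels&11disc-2} and $(\star)$ must terminate in a concrete contradiction uniformly in $q \in \kut$, including exceptional values where intermediate equations (such as those involving $1+q^2$ or $1+2q^2$) may degenerate. A careful analysis modeled on the treatment of the system $(\mathscr{S}_{\ghost})$ in Lemma \ref{lema:point-part2}, with appropriate adjustments to accommodate the vanishing $b_{i+k}$, is expected to carry the argument through.
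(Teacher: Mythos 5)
Your overall target — proving $b_j = 0$ for all $j$, so that cyclicity forces $c_j \neq 0$ and hence $P_j = (0:0:1)$ — and several of your ingredients (the closed formula for $\gamma_{n,j}$, the identity $c_ib_{i+1}\cdots b_{i+\ghost+1}=0$ coming from the relation $z_{\ghost+1}=0$, and the relation $c_{i+1}b_{i+2}=2q\,b_{i+1}c_{i+2}$ once $b_i=0$) all coincide with the paper's proof. However, there is a genuine gap: you complete the \emph{forward propagation} step $b_i=0\Rightarrow b_{i+1}=0$ only in the case $k=2$; for $k\ge 3$ you explicitly leave the argument unfinished, expecting the ``iterated polynomial identities'' to ``terminate in a concrete contradiction'' without carrying this out. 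This is not obvious: already for $k=3$, combining $(\star)$ at $i$ with the analogous relation from \eqref{eq:lstr-rels&11disc-2} applied to $v_{i+1}$ (using $b_{i+3}=0$, $c_{i+3}\neq 0$) yields only $c_{i+1}=c_{i+2}=0$, which is not yet a contradiction since $b_{i+1},b_{i+2}\neq 0$ keep $P_{i+1},P_{i+2}$ well-defined; one must go further, and the bookkeeping for general $k$ is exactly what you concede is unresolved. Moreover your \emph{backward propagation} step invokes forward propagation, so the whole argument stands or falls with the unestablished $k\ge 3$ case.

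The paper avoids this difficulty by a different organization. Instead of proving forward propagation directly, it establishes a dichotomy. Step~1 uses $(\star)$ together with a minimality argument to show that $b_{i+1}=0\iff b_{i+2}=0$ and that two consecutive vanishing $b$'s force $b_j=0$ for all $j$. Step~2 then proves, by induction on $n$, that either all $b_j=0$ or $b_{i+m}\neq 0$ for every $m\geq 1$; the inductive step relies on the relation $\af{i+n+1}{n}b_{i+2n+2}-2q\,\af{i+n+2}{n}b_{i+n+1}=0$ together with the observation that setting $b_{i+2n+1}$ or $b_{i+2n+2}$ to zero collapses the sum defining the relevant $\af{\cdot}{n}$ to a single monomial, thereby forcing two consecutive $b$'s to vanish and landing back in the excluded case of Step~1. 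The single final relation $c_ib_{i+1}\cdots b_{i+\ghost+1}=0$ then rules out the second alternative. You should either adopt this dichotomy-and-induction organization or supply the missing $k\ge 3$ argument; as written, your proof is incomplete.
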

	\begin{proof}
		We set $z_nv_i=\af{i}{n}v_{i+n+1}$, $i\in \N_0$, $n \in \I_{0,\ghost}$. 
		Recall that $a_i = 0$ for all $i\in \N_0$ by Remark \ref{rem:ai=0}; thus $b_i$ and $\zeta^{(0)}_i=c_i$ 
		could not be both 0, as $V$ is cyclic.  
		The proof of \eqref{eq:well-known} is easy and follows a well-known pattern:
		\begin{align}\label{eq:well-known}
			z_n \overset{\eqref{eq:-1block+point-z2}}{=} x_2 z_{n-1} - q z_{n-1} x_2 
			=\sum_{k \in \I_{0, n}} \textstyle\binom{n}{k} (-q)^k x_2^{n-k} z_0 x_2^k, \quad n\in \I_{\ghost}.
		\end{align}
		Evaluating these identities  at $v_i$, $i \in \N_0$, we get for $ n\in \I_{\ghost}$:
		\begin{align} \label{eq-sis-0}
			\af{i}{n} & =\af{i}{n-1}b_{i+n}-qb_i\af{i+1}{n-1} = \sum_{k \in \I_{0, n}} \textstyle\binom{n}{k} (-q)^k  \af{i+k}{0}b^{(k)}_{i,n}
			\\
			\notag \text{where } \quad & b^{(k)}_{i,n} = b_i b_{i+1} \cdots b_{i+k-1} b_{i+k+1}\cdots b_{i+n} = \prod_{h\in \I_{0, n}, h\neq k} b_{i+h}.
		\end{align}
		
		Evaluating \eqref{eq:lstr-rels&11disc-2}, respectively \eqref{eq:lstr-rels&11disc-qserre}, at $v_i$
		and plugging in appropriate instances of \eqref{eq-sis-0},  we get for  $i\in \N_0$ and $n\in \I_{0,\ghost-1}$
		\begin{align} \label{eq-sis-1}
			\af{i}{n}\af{i+n+1}{n}b_{i+2n+2}&-2q\af{i}{n}\af{i+n+2}{n}b_{i+n+1} + q^2 \af{i+1}{n}\af{i+n+2}{n} b_i=0,
			\\ 
			\label{eq-sis-2} &\af{i}{\ghost}b_{i+\ghost+1}-qb_i\af{i+1}{\ghost}=0.
		\end{align}

		\bigbreak\emph{We fix for the remaining of the proof $i\in \N_0$ such that $b_i=0$.}

		\begin{paso}\label{claim1} We have  $b_{i+1}=0$ if and only if $b_{i+2}=0$. If, in addition, $b_{i+1}=0$, then $b_j=0$, $j\in \N_0$.
		\end{paso}
		
		\noindent Since $c_i = \af{i}{0}\neq 0$ it follows from \eqref{eq-sis-1} that $\af{i+1}{0}b_{i+2} - 2q\af{i+2}{0}b_{i+1} = 0$. 
		Thus $b_{i+1}=0$ if and only if $b_{i+2}=0$. Consequently, if $b_{i+1}=0$, then $b_{i+\ell}=0$, $\ell \geq 2$.
		
		Assume that there exists $i\in \N_0$ such that $b_i=b_{i+1}=0$.
		Let $t\in \N_0$ be the smallest one such that $b_{t}= b_{t+1}= 0$. If $t>0$ then \eqref{eq-sis-1} implies that
		$\af{t-1}{0}\af{t}{0}b_{t+1} - 2q\af{t-1}{0}\af{t+1}{0}b_{t} + q^2\af{t}{0}\af{t+1}{0}b_{t-1} = 0$. 
		Since $b_t=b_{t+1}=0$, we get that $b_{t-1}\af{t}{0}\af{t+1}{0}=0$, a contradiction because $\af{t}{0}\neq 0\neq\af{t+1}{0}$. 
		Hence $t=0$ and the second part of the claim follows from the first.

		\begin{paso} \label{claim3}
			Either $b_j=0$  for all  $j\in \N_0$ or else $b_{i+m} \neq 0$
			for all $m\in \N$.
		\end{paso}

		Assume that the first possibility does not hold. We shall prove by induction that $b_{i+2n+1} \neq 0$ and $b_{i+2n+2} \neq 0$ for all 
		$n\in \N_0$. When $n = 0$,  $b_{i+1} \neq 0$ and  $b_{i+2} \neq 0$ by Step \ref{claim1}.
		Let $n\in \N$ and suppose that $b_{i+1} \ldots b_{i+2n}\neq 0$. 
		We claim  that $b_{j+2n+1} \neq 0$ and $b_{j+2n+2} \neq 0$.
		Since $b_i = 0$, $\af{i}{0} \neq 0$  we have 
		\begin{align*}
			\af{i}{n}  & \overset{\eqref{eq-sis-0}}{=} \sum_{k \in \I_{0, n}} \textstyle\binom{n}{k} (-q)^k  \af{i+k}{0}b^{(k)}_{i,n}
			= \af{i}{0}b^{(0)}_{i,n} =  \af{i}{0} b_{i+1}\ldots b_{i+n}\neq 0.
		\end{align*} 
		
		Then \eqref{eq-sis-1}implies that 
		\begin{align}\label{eq-sis-1-part}
			\af{i+n+1}{n}b_{i+2n+2}&-2q \af{i+n+2}{n}b_{i+n+1} =0.
		\end{align}
		If  $b_{i+2n+2} = 0$, then $\af{i+2n+2}{0}\neq 0$ and
		\begin{align*}
			0& \overset{\eqref{eq-sis-1-part}}{=}\af{i+n+2}{n} = \sum_{k \in \I_{0, n}} \textstyle\binom{n}{k} (-q)^k  \af{i+n+2+k}{0} \, b^{(k)}_{i+n+2,n}
			\\
			& = (-q)^n  \af{i+2n+2}{0} \, b_{i+n+2}b_{i+n+3} \dots b_{i+2n+1} \implies b_{i+2n+1}  = 0.
		\end{align*}
		By Step \ref{claim1} $b_j = 0$ for all $j\in \N_0$, contradicting the assumption.
		
		Similarly assume that   $b_{i+2n+1} = 0$. Then $\af{i+2n+1}{0}\neq 0$ and
		\begin{align*}
			\af{i+n+1}{n} &= (-q)^n  \af{i+2n+1}{0} \, \, b_{i+n+1} \dots b_{i+2n} 
			\\
			\af{i+n+2}{n} &= n (-q)^{n-1}  \af{i+2n+1}{0}  \, b_{i+n+2} \dots b_{i+2n} b_{i+2n+2},
			\\
			\implies 0 &\overset{\eqref{eq-sis-1-part}}{=} (1 - 2n) (-q)^n  \af{i+2n+1}{0} \, \, b_{i+n+1} \dots b_{i+2n} b_{i+2n+2} 
			\\
			\implies 0 &= b_{i+2n+2}.
		\end{align*}
		Again $b_j = 0$ for all $j\in \N_0$ by Step \ref{claim1}, a contradiction.
		The Step is proved.
		
		\bigbreak
		To finish the proof of  the Lemma, we just observe that 
		\begin{align*}
			0 \overset{\eqref{eq-sis-2}}{=} &\af{i}{\ghost}b_{i+\ghost+1}  = \sum_{k \in \I_{0, \ghost}} \textstyle\binom{\ghost}{k} (-q)^k  \af{i+k}{0}b^{(k)}_{i,\ghost}b_{i+\ghost+1} 
			\\ = &\af{i}{0} b_{i+1}b_{i+2}b_{i+3}\dots   b_{i,\ghost}b_{i+\ghost+1} 
		\end{align*}
		Hence $b_{i+3}\ldots b_{i+\ghost+1}=0$. 
		Step \ref{claim3} implies that $b_{j}=0$ for all $j\in \N_0$ and the Lemma follows.
	\end{proof}

	\section*{Declaration}
	
	\subsection*{Data Availability Statement}
	Data sharing is not applicable to this article as no datasets were generated or analysed during the current study.

\end{document}